\newtheorem{theorem}{Theorem}[section]
\newtheorem{proposition}[theorem]{Proposition}
\newtheorem{lemma}[theorem]{Lemma}
\newtheorem{corollary}[theorem]{Corollary}
\theoremstyle{definition}
\theoremstyle{remark}
\newcommand{\N}{\mathbb{N}}
\newcommand{\Z}{\mathbb{Z}}
\newcommand{\R}{\mathbb{R}}
\newcommand{\C}{\mathbb{C}}
\newcommand{\cD}{\mathscr{D}}
\newcommand{\cS}{\mathscr{S}}
\newcommand{\cO}{\mathscr{O}}
\newcommand{\cE}{\mathscr{E}}
\newcommand{\cB}{\mathscr{B}}
\newcommand{\abso}[1]{{\lvert#1\rvert}}
\newcommand{\dx}{{\rm d}x }
\newcommand{\dt}{{\rm d}t }
\numberwithin{equation}{section}
\begin{document}

\title[Sequence space representations via Wilson bases]
{Sequence space representations for spaces of smooth functions and distributions via Wilson bases}

\author{C.~Bargetz}
\address{C.~Bargetz, Department of Mathematics\\Universität Innsbruck\\Technikerstraße 13\\6020 Innsbruck\\Austria}
\email{christian.bargetz@uibk.ac.at}

\author{A.~Debrouwere}
\address{A.~Debrouwere, Department of Mathematics and Data Science \\ Vrije Universiteit Brussel, Belgium\\ Pleinlaan 2 \\ 1050 Brussels \\ Belgium}
\email{andreas.debrouwere@vub.be}

\author{E.~A.~Nigsch}
\address{E.~A.~Nigsch, Institute of Analysis and Scientific Computing, TU Wien, 1040 Vienna, Austria}
\email{eduard.nigsch@tuwien.ac.at}

\begin{abstract} 
We provide explicit sequence space representations for  the test function and distribution spaces occurring in the Valdivia-Vogt structure tables by making use of Wilson bases generated by compactly supported smooth windows. Furthermore, we show that these kind of bases are common unconditional Schauder bases for all separable spaces occurring in these tables. Our work implies that the Valdivia-Vogt structure tables for test functions and distributions may  be interpreted as one large commutative diagram. 
\end{abstract}
\subjclass[2020]{\emph{Primary.}  46F05, 46A45. \emph{Secondary} 81S30.} 
\keywords{Test function spaces, distribution spaces, sequence space representations; Wilson bases}
\maketitle
\section{Introduction}
Already in~\cite[p.~262]{Schwartz} L.~Schwartz observed that the Hermite functions~$(H_n)_{n=1}^{\infty}$  form a basis of $\cS$ and that the mapping
\begin{equation} 
 \cS \to s, \, f\mapsto (\langle f, H_n\rangle)_{n=1}^{\infty},
 \label{hermite}
\end{equation}
is an isomorphism of locally convex spaces. Around 1980, M.~Valdivia and D.~Vogt~\cite{ValdiviaDK, Valdivia, Val80:CertainSpaces, Val81:DLp, Val79:RepresenationD, Val82:LCS, Vogt} found sequence space representations for many spaces of (generalized) functions. We refer to \cite{B14,B15,BargetzOC, BExplicit, Debrouwere, C-G-P-R, Langenbruch2012, Langenbruch2016} for more recent works on this topic. N.~Ortner and P.~Wagner presented in~\cite{OWDLp} the known sequence space representations for the spaces occurring in  L.~Schwartz' theory of distributions \cite{Schwartz} in two tables which, in updated form, are

\begin{equation}\label{table_functions}
  \begin{gathered}
    \xymatrix@R=1.5em@C=0.8em{
      \cD \ar@{}[r]|-*[@]\txt{$\subseteq$} \ar@{}[d]|-*\txt{\rotatebox[origin=c]{-90}{$\cong$}} &
      \cD^F \ar@{}[r]|-*[@]\txt{$\subseteq$} \ar@{}[d]|-*\txt{\rotatebox[origin=c]{-90}{$\cong$}} &
      \cS \ar@{}[r]|-*[@]\txt{$\subseteq$} \ar@{}[d]|-*\txt{\rotatebox[origin=c]{-90}{$\cong$}} &
      \cD_{L^p} \ar@{}[r]|-*[@]\txt{$\subseteq$} \ar@{}[d]|-*\txt{\rotatebox[origin=c]{-90}{$\cong$}} &
      \dot\cB \ar@{}[r]|-*[@]\txt{$\subseteq$} \ar@{}[d]|-*\txt{\rotatebox[origin=c]{-90}{$\cong$}} &
      \cD_{L^\infty} \ar@{}[r]|-*[@]\txt{$\subseteq$} \ar@{}[d]|-*\txt{\rotatebox[origin=c]{-90}{$\cong$}} &
      \cO_C \ar@{}[r]|-*[@]\txt{$\subseteq$} \ar@{}[d]|-*\txt{\rotatebox[origin=c]{-90}{$\cong$}} &
      \cO_M \ar@{}[r]|-*[@]\txt{$\subseteq$} \ar@{}[d]|-*\txt{\rotatebox[origin=c]{-90}{$\cong$}} &
      \cE \ar@{}[d]|-*\txt{\rotatebox[origin=c]{-90}{$\cong$}}
      \\
      \C^{(\N)} \widehat\otimes_\iota s \ar@{}[r]|-*[@]\txt{$\subseteq$} &
      \C^{(\N)} \widehat\otimes s \ar@{}[r]|-*[@]\txt{$\subseteq$} &
      s \widehat\otimes s \ar@{}[r]|-*[@]\txt{$\subseteq$} &
      \ell^p \widehat\otimes s \ar@{}[r]|-*[@]\txt{$\subseteq$} &
      c_0 \widehat\otimes s \ar@{}[r]|-*[@]\txt{$\subseteq$} &
      \ell^\infty \widehat\otimes s \ar@{}[r]|-*[@]\txt{$\subseteq$} &
      s' \widehat\otimes_\iota s \ar@{}[r]|-*[@]\txt{$\subseteq$} &
      s' \widehat\otimes s \ar@{}[r]|-*[@]\txt{$\subseteq$} &
      \C^\N \widehat\otimes s
    }
  \end{gathered}
\end{equation}

\begin{equation}\label{table_distributions}
  \begin{gathered}
    \xymatrix@R=1.5em@C=1em{
      \cE' \ar@{}[r]|-*[@]\txt{$\subseteq$} \ar@{}[d]|-*\txt{\rotatebox[origin=c]{-90}{$\cong$}} &
      \cO_M' \ar@{}[r]|-*[@]\txt{$\subseteq$} \ar@{}[d]|-*\txt{\rotatebox[origin=c]{-90}{$\cong$}} &
      \cO_C' \ar@{}[r]|-*[@]\txt{$\subseteq$} \ar@{}[d]|-*\txt{\rotatebox[origin=c]{-90}{$\cong$}} &
      \cD'_{L^p} \ar@{}[r]|-*[@]\txt{$\subseteq$} \ar@{}[d]|-*\txt{\rotatebox[origin=c]{-90}{$\cong$}} &
      \dot\cB' \ar@{}[r]|-*[@]\txt{$\subseteq$} \ar@{}[d]|-*\txt{\rotatebox[origin=c]{-90}{$\cong$}} &
      \cD'_{L^\infty} \ar@{}[r]|-*[@]\txt{$\subseteq$} \ar@{}[d]|-*\txt{\rotatebox[origin=c]{-90}{$\cong$}} &
      \cS' \ar@{}[r]|-*[@]\txt{$\subseteq$} \ar@{}[d]|-*\txt{\rotatebox[origin=c]{-90}{$\cong$}} &
      \cD' \ar@{}[d]|-*\txt{\rotatebox[origin=c]{-90}{$\cong$}}
      \\
      \C^{(\N)} \widehat\otimes s' \ar@{}[r]|-*[@]\txt{$\subseteq$} &
      s \widehat\otimes_\iota s' \ar@{}[r]|-*[@]\txt{$\subseteq$} &
      s \widehat\otimes s' \ar@{}[r]|-*[@]\txt{$\subseteq$} &
      \ell^p \widehat\otimes s'  \ar@{}[r]|-*[@]\txt{$\subseteq$} &
      c_0 \widehat\otimes s'  \ar@{}[r]|-*[@]\txt{$\subseteq$} &
      \ell^\infty \widehat\otimes s' \ar@{}[r]|-*[@]\txt{$\subseteq$} &
      s' \widehat\otimes s' \ar@{}[r]|-*[@]\txt{$\subseteq$} &
      \C^{\N} \widehat\otimes s' .
    }
  \end{gathered}
\end{equation}
The tables \eqref{table_functions} and \eqref{table_distributions} are often called the \emph{Valdivia-Vogt structure tables}. We refer to the book of L.~Schwartz \cite{Schwartz} (see also \cite{Horvath}) for the definitions of the test function and distribution spaces occurring in \eqref{table_functions} and \eqref{table_distributions}. The sequence spaces occurring in these tables will be discussed in Section \ref{sect-sequence spaces} below.

In contrast to the isomorphism $\cS \cong s$ induced by the mapping \eqref{hermite}, the original proofs of most of the isomorphisms in \eqref{table_functions} and \eqref{table_distributions} were obtained in a non-constructive fashion as they were based on the Pe{\l}czy\'{n}ski decomposition method.  Hence, it becomes an interesting problem to construct explicit sequence space representations for the spaces occurring in these tables. This question was studied e.g.\ in \cite{B14, BExplicit, OWDLp}. In the present paper, we provide  explicit mappings inducing the isomorphisms in \eqref{table_functions} and \eqref{table_distributions} by making use of \emph{Wilson bases}, a remarkable class of orthonormal bases of $L^2$ constructed by  I.~Daubechies  et.~al \cite{DJJ} in the context of time-frequency analysis \cite{FTFA}.  

In~\cite{B14} the first author showed that the tables \eqref{table_functions} and~\eqref{table_distributions}  may be interpreted as  commutative diagrams. More precisely, he constructed an isomorphism $\Psi \colon \cE \to \C^\N\widehat{\otimes} s$  such that all isomorphisms in~\eqref{table_functions}  can be chosen as restrictions of $\Psi$. By restricting and transposing, it was shown that a similar result holds for the table \eqref{table_distributions}. Since the spaces in~\eqref{table_functions} are embedded into $\cD'$, it seems natural to ask whether these  two tables may be interpreted as one large commutative diagram. Our main result implies that this is indeed the case. Namely, the following result holds, which was  one of the main motivations for this paper.

\begin{theorem}\label{thm-intro} The tables \eqref{table_functions} and~\eqref{table_distributions} may be combined into a single  commutative diagram, i.e., there exists an isomorphism $\Phi: \cD' \rightarrow \C^{\N} \widehat\otimes s'$ such
 that the restriction of $\Phi$  to  each of the spaces in the upper row of  \eqref{table_functions} and \eqref{table_distributions}  is an isomorphism onto the corresponding space in the lower row.
\end{theorem}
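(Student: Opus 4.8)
The plan is to realize every isomorphism in \eqref{table_functions} and \eqref{table_distributions} through a \emph{single} analysis--synthesis transform attached to one fixed Wilson basis, so that commutativity of the combined diagram becomes automatic rather than something to be checked arrow by arrow. Concretely, I would fix a Wilson basis $(\psi_\lambda)_{\lambda\in\Lambda}$ of $L^2$ generated by a smooth compactly supported window $g\in\cD$, so that every $\psi_\lambda$ is itself a test function and the index set $\Lambda$ factors as a spatial index $k$ times a frequency index $n$. I would then define $\Phi$ on all of $\cD'$ by the coefficient map $\Phi(f)=(\langle f,\psi_\lambda\rangle)_{\lambda\in\Lambda}$, which is meaningful for every distribution precisely because each $\psi_\lambda\in\cD$. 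Since this one formula is used for every space in both tables, any two restrictions agree on their common domain by construction, and the entire burden of the proof shifts to showing that $\Phi$ carries each individual space isomorphically onto the sequence space claimed for it.

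The core of the argument is therefore a family of coefficient characterizations, beginning with the master statement that $\Phi$ is a topological isomorphism $\cD'\to\C^\N\widehat\otimes s'$. Here the spatial index produces the unrestricted factor $\C^\N$, reflecting that a distribution may behave arbitrarily at infinity, while the frequency index produces $s'$, reflecting that a distribution is locally of finite order and hence has coefficients growing at most polynomially in $n$. Continuity of $\Phi$ reduces to seminorm estimates for the pairing of a distribution with the $\psi_\lambda$, whose derivatives grow only polynomially in $n$ (accounting for the $s'$ factor) and are supported near $k$; continuity of the inverse synthesis map $(c_\lambda)_\lambda\mapsto\sum_\lambda c_\lambda\psi_\lambda$ uses the compact support and smoothness of the window to sum the series in $\cD'$ with the correct seminorm control. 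The decisive structural point is that the two indices \emph{decouple}: the spatial condition on $k$ and the regularity condition on $n$ can be imposed independently, which is exactly what identifies the coefficient space as a completed tensor product rather than a more entangled sequence space.

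With the master isomorphism in place, I would rerun the same coefficient analysis on each remaining space, in every case translating membership into the product of a spatial condition on the $k$-index (compact support $\leftrightarrow\C^{(\N)}$, $L^p$-decay $\leftrightarrow\ell^p$, vanishing at infinity $\leftrightarrow c_0$, rapid decay $\leftrightarrow s$, tempered growth $\leftrightarrow s'$) and a regularity condition on the $n$-index ($s$ for the smooth spaces, $s'$ for the distribution spaces). For the spaces of the function table this reproduces the transform $\Psi$ of \cite{B14} up to the inclusion $s\hookrightarrow s'$, so the two tables glue along their overlap and the resulting diagram commutes; the distribution-side characterizations may alternatively be obtained from the function-side ones by transposition.

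I expect the principal obstacle to lie not in the coefficient estimates but in the topological bookkeeping. One must track the distinction between the $\iota$-tensor product and the full completed tensor product for the non-nuclear entries such as $\C^{(\N)}\widehat\otimes_\iota s$ and $s\widehat\otimes_\iota s'$, and one must verify that the synthesis map is simultaneously continuous for genuinely different space types---Fréchet, $(DF)$, and $(LB)$---using the \emph{same} window. Establishing the isomorphism at the level of the tensor-product seminorms uniformly across all of these is where the care is needed, and it is the unconditional Schauder basis property of the Wilson system that ultimately makes the synthesis map well defined and continuous in every case.
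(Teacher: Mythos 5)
Your proposal follows essentially the same route as the paper: fix a Wilson basis with window in $\cD$, take $\Phi$ to be its coefficient (analysis) map defined on all of $\cD'$, establish the master isomorphism onto $\C^{\Z}\widehat\otimes s'(\N)$ together with the restriction isomorphisms via continuity estimates for the analysis and synthesis operators on each space (using transposition for the distribution table), and observe that commutativity of the combined diagram is automatic because every arrow is a restriction of the single map. The only step you elide is the final reenumeration of the Wilson index set $\Z\times\N$ to $\N\times\N$ needed to land in $\C^{\N}\widehat\otimes s'$ as stated, which the paper likewise treats as routine.
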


We now briefly explain Wilson bases and how they will be used in our work. We write 
\[
T_xf(t) = f(t-x), \qquad M_\xi f(t) = f(t)e^{2\pi i \xi t}, \qquad  x,\xi \in \R,
\]
for the translation and modulation operators on $\R$.  Given a window $\psi \in L^2(\R)$, the Wilson system  $\mathcal{W}(\psi)$  is defined as the following collection of linear combinations of time-frequency shifts of $\psi$:
\begin{equation}\label{eq:Wilson1D}
  \begin{aligned}
    \psi_{k,0} &= T_{k} \psi, \qquad k \in \Z, \\
    \psi_{k,n} &= \frac{1}{\sqrt{2}} T_{\frac{k}{2}}(M_n+ (-1)^{k+n} M_{-n})\psi, \qquad (k,n)\in\Z\times\N_{>0}.
  \end{aligned}
\end{equation}
Suppose that $\psi(x) = \overline{\psi(-x)}$. In \cite{DJJ} it is shown that $\mathcal{W}(\psi)$ is an orthonormal basis for $L^2(\R)$ (called a Wilson basis) if and only if 
\begin{equation}\label{eq:Wilson-cond}
 \sum_{k \in \Z} T_{n+ \frac{k}{2}}\psi T_{\frac{k}{2}} \overline{\psi} = 2\delta_{n,0}, \qquad n \in \N.
\end{equation}
 We refer to  \cite[Corollary 8.5.4]{FTFA} for various other characterizations of Wilson bases. Note that \eqref{eq:Wilson-cond} implies that there exists a Wilson basis $\mathcal{W}(\psi)$ generated by a window $\psi \in \cD(\R)$ (cf.\ \cite[Corollary 8.5.5(b)]{FTFA}); the existence of such a basis is rather surprising in view of the ``no-go" Balian-Low theorem \cite[Theorem 8.4.1]{FTFA} (see also \cite[Corollary 7.5.2]{FTFA}).
 
 As a rule of thumb, most global spaces of (generalized) functions may be characterized in terms  of the short-time Fourier transform; see e.g. \cite{B-O, DV21, G-Z, K-P-S-V}. In other words, they can be written as (topological) unions and/or intersections of the weighted modulation spaces $M^{p,q}_m$ \cite[Chapter 11]{FTFA}. Furthermore, Wilson bases provide sequence space representations for the spaces $M^{p,q}_m$ \cite[Theorem 12.3.4]{FTFA}. Inspired by these two observations, we show in this paper that 
Wilson bases may be used to obtain explicit sequence space representations for the  spaces occurring in \eqref{table_functions} and \eqref{table_distributions}. More precisely, we shall prove the following result (see also Theorem \ref{USSR} below).

\begin{theorem}\label{Wilson-intro}
 Let $\psi \in \cD(\R)$ be such that $\mathcal{W}(\psi)$ is a Wilson basis for $L^2(\R)$. Then,  the mapping
 $$
 \widetilde{C}_{\psi} : \cD'(\R) \rightarrow  \C^{\Z} \widehat\otimes s'(\N), \, f \mapsto (\langle f, \overline{\psi_{k,n}} \rangle )_{k,n}
 $$
 is an isomorphism of locally convex spaces. Consider the tables
  \begin{equation}\label{table_functions-1-1}
    \begin{gathered}
      \xymatrix@R=1.5em@C=0.6em{
        \cD \ar@{}[r]|-*[@]\txt{$\subseteq$} \ar@{}[d]|-*\txt{\rotatebox[origin=c]{-90}{$\cong$}} &
        \cD^F \ar@{}[r]|-*[@]\txt{$\subseteq$} \ar@{}[d]|-*\txt{\rotatebox[origin=c]{-90}{$\cong$}} &
        \cS \ar@{}[r]|-*[@]\txt{$\subseteq$} \ar@{}[d]|-*\txt{\rotatebox[origin=c]{-90}{$\cong$}} &
        \cD_{L^p} \ar@{}[r]|-*[@]\txt{$\subseteq$} \ar@{}[d]|-*\txt{\rotatebox[origin=c]{-90}{$\cong$}} &
        \dot\cB \ar@{}[r]|-*[@]\txt{$\subseteq$} \ar@{}[d]|-*\txt{\rotatebox[origin=c]{-90}{$\cong$}} &
        \cD_{L^\infty} \ar@{}[r]|-*[@]\txt{$\subseteq$} \ar@{}[d]|-*\txt{\rotatebox[origin=c]{-90}{$\cong$}} &
        \cO_C \ar@{}[r]|-*[@]\txt{$\subseteq$} \ar@{}[d]|-*\txt{\rotatebox[origin=c]{-90}{$\cong$}} &
        \cO_M \ar@{}[r]|-*[@]\txt{$\subseteq$} \ar@{}[d]|-*\txt{\rotatebox[origin=c]{-90}{$\cong$}} &
        \cE \ar@{}[d]|-*\txt{\rotatebox[origin=c]{-90}{$\cong$}}
        \\
        \C^{(\Z)} \widehat\otimes_\iota s \ar@{}[r]|-*[@]\txt{$\subseteq$} &
        \C^{(\Z)} \widehat\otimes s \ar@{}[r]|-*[@]\txt{$\subseteq$} &
        s \widehat\otimes s \ar@{}[r]|-*[@]\txt{$\subseteq$} &
        \ell^p \widehat\otimes s \ar@{}[r]|-*[@]\txt{$\subseteq$} &
        c_0 \widehat\otimes s \ar@{}[r]|-*[@]\txt{$\subseteq$} &
        \ell^\infty  \widehat\otimes s \ar@{}[r]|-*[@]\txt{$\subseteq$} &
        s' \widehat\otimes_\iota s \ar@{}[r]|-*[@]\txt{$\subseteq$} &
        s' \widehat\otimes s \ar@{}[r]|-*[@]\txt{$\subseteq$} &
        \C^{\Z} \widehat\otimes s
      }
    \end{gathered}
  \end{equation}
    \begin{equation}\label{table_distributions-1-1}
    \begin{gathered}
      \xymatrix@R=1.5em@C=1em{
        \cE' \ar@{}[r]|-*[@]\txt{$\subseteq$} \ar@{}[d]|-*\txt{\rotatebox[origin=c]{-90}{$\cong$}} &
        \cO_M' \ar@{}[r]|-*[@]\txt{$\subseteq$} \ar@{}[d]|-*\txt{\rotatebox[origin=c]{-90}{$\cong$}} &
        \cO_C' \ar@{}[r]|-*[@]\txt{$\subseteq$} \ar@{}[d]|-*\txt{\rotatebox[origin=c]{-90}{$\cong$}} &
        \cD'_{L^p} \ar@{}[r]|-*[@]\txt{$\subseteq$} \ar@{}[d]|-*\txt{\rotatebox[origin=c]{-90}{$\cong$}} &
        \dot\cB' \ar@{}[r]|-*[@]\txt{$\subseteq$} \ar@{}[d]|-*\txt{\rotatebox[origin=c]{-90}{$\cong$}} &
        \cD'_{L^\infty} \ar@{}[r]|-*[@]\txt{$\subseteq$} \ar@{}[d]|-*\txt{\rotatebox[origin=c]{-90}{$\cong$}} &
        \cS' \ar@{}[r]|-*[@]\txt{$\subseteq$} \ar@{}[d]|-*\txt{\rotatebox[origin=c]{-90}{$\cong$}} &
        \cD' \ar@{}[d]|-*\txt{\rotatebox[origin=c]{-90}{$\cong$}}
        \\
        \C^{(\Z)} \widehat\otimes s' \ar@{}[r]|-*[@]\txt{$\subseteq$} &
        s \widehat\otimes_\iota s' \ar@{}[r]|-*[@]\txt{$\subseteq$} &
        s \widehat\otimes s' \ar@{}[r]|-*[@]\txt{$\subseteq$} &
        \ell^p \widehat\otimes s'  \ar@{}[r]|-*[@]\txt{$\subseteq$} &
        c_0 \widehat\otimes s'  \ar@{}[r]|-*[@]\txt{$\subseteq$} &
        \ell^\infty \widehat\otimes s' \ar@{}[r]|-*[@]\txt{$\subseteq$} &
        s' \widehat\otimes s' \ar@{}[r]|-*[@]\txt{$\subseteq$} &
        \C^{\Z} \widehat\otimes s'
      }
    \end{gathered}
  \end{equation}
  where the test function and distribution spaces are defined over $\R$ and the sequence spaces are defined over the index set $\Z \times \N$. Then, the restriction of $\widetilde{C}_\psi$  to  each of the spaces in the upper row of \eqref{table_functions-1-1} and \eqref{table_distributions-1-1}  is an isomorphism onto the corresponding space in the lower row. 
 \end{theorem}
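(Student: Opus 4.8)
The plan is to treat the first assertion---that $\widetilde{C}_\psi$ is an isomorphism $\cD'\to\C^\Z\widehat\otimes s'$---as the anchor, and then to obtain every entry of both tables as a restriction of this single map. Throughout I would work with the synthesis operator $D_\psi\colon (c_{k,n})_{k,n}\mapsto \sum_{k,n}c_{k,n}\psi_{k,n}$ alongside the analysis operator $\widetilde{C}_\psi$; on $L^2(\R)$ these are mutually inverse because $\mathcal{W}(\psi)$ is an orthonormal basis. Since $\psi\in\cD(\R)$, each $\psi_{k,n}$ lies in $\cD(\R)$ with support in an interval of fixed length whose location is governed by $k$ (translation only shifts, and does not enlarge, the support, while modulation leaves it unchanged); hence the pairing $\langle f,\overline{\psi_{k,n}}\rangle$ is well defined for every $f\in\cD'$ and $\widetilde{C}_\psi$ makes sense.

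For the anchor isomorphism I would prove four things. (i) \emph{Continuity of $\widetilde{C}_\psi$}: estimate $\langle f,\overline{\psi_{k,n}}\rangle$ by the $\cD'$-seminorms, using that on each bounded range of $k$ the windows stay in a fixed compact set, so $f$ acts there with finite order, giving at most polynomial growth in $n$ (the $s'$-direction) and arbitrary behavior in $k$ (the $\C^\Z$-direction). (ii) \emph{Continuity of $D_\psi$ into $\cD'$}: show $\sum_{k,n}c_{k,n}\psi_{k,n}$ converges in $\cD'$ and depends continuously on $(c_{k,n})\in\C^\Z\widehat\otimes s'$. (iii) \emph{The identities $D_\psi\circ\widetilde{C}_\psi=\mathrm{id}$ and $\widetilde{C}_\psi\circ D_\psi=\mathrm{id}$}: verify these on the dense span of the basis and extend by continuity and density, which simultaneously yields injectivity and surjectivity. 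Nuclearity of $s'$ and of $\C^\Z$ guarantees that the projective and injective tensor topologies agree, so the completed tensor product is unambiguous.

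The technical core is a pair of two-sided estimates that decouple the two indices. In the frequency index $n$, the factor $M_{\pm n}$ makes $\overline{\psi_{k,n}}$ oscillate at frequency $n$, so integrating against a locally smooth function produces rapid decay in $n$ (the $s$-direction), while a distribution of finite local order forces only polynomial growth (the $s'$-direction); quantitatively this is cleanest through the comparison of the Wilson coefficients with samples of the short-time Fourier transform $V_\psi f$ near the phase-space points $(\tfrac{k}{2},\pm n)$. In the time index $k$, the uniform support bound means that the $k$-profile of the coefficient sequence records the global spatial behavior of $f$: compact support $\leftrightarrow$ finitely many nonzero columns ($\C^{(\Z)}$), $L^p$-control of all derivatives $\leftrightarrow$ $\ell^p$-summability, boundedness $\leftrightarrow$ $\ell^\infty$, vanishing at infinity $\leftrightarrow$ $c_0$, Schwartz decay $\leftrightarrow$ $s$, and slow growth $\leftrightarrow$ $s'$. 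Turning these correspondences into norm equivalences---uniformly in the complementary index---is exactly the statement that Wilson bases give sequence space representations of the weighted modulation spaces $M^{p,q}_m$, which I would invoke and then specialize.

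Finally I would run through the columns, identifying for each upper-row space $X$ its system of seminorms and matching it with the sequence space $Y$ via the estimates above; since each of these maps is literally the restriction of $\widetilde{C}_\psi$, commutativity of the combined diagram is automatic, and the inclusions $s\hookrightarrow s'$ in the second factor are what glue the two tables together along the embeddings $\cE\hookrightarrow\cD'$, $\cS\hookrightarrow\cS'$, and so on. I expect the main obstacle to be the three spaces carrying the inductive tensor topology, namely $\cD\cong\C^{(\Z)}\widehat\otimes_\iota s$, $\cO_C\cong s'\widehat\otimes_\iota s$ and $\cO_M'\cong s\widehat\otimes_\iota s'$: there one factor is neither Fréchet nor normable, the $\iota$-topology does not reduce to a single mixed-norm condition on the coefficients, and the convolutor/multiplier structure of $\cO_C,\cO_M$ mixes the two indices in a genuinely non-product way. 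For these I would control the $\iota$-topology directly---via equicontinuous families and the behavior of $\widetilde{C}_\psi$ on the defining (co)limits---rather than attempting to read off a product-type sequence condition.
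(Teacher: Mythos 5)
Your outline follows the same strategy as the paper: establish the anchor isomorphism $\cD'\cong\C^{\Z}\widehat\otimes s'(\N)$ by showing that $\widetilde{C}_\psi$ and $\widetilde{D}_\psi$ are continuous and mutually inverse (the $L^2$-identities extend by density, exactly as in Theorem~\ref{prop:WilsonBasisDprime}), relate the Wilson coefficients to samples of $V_\psi f$ on the lattice, prove estimates in which oscillation governs the $n$-index and the fixed compact support governs the $k$-index, and then obtain each column as a restriction so that commutativity of the combined diagram is automatic. Two places where your route genuinely diverges from the paper's are worth recording. First, the paper never attacks the distribution table by matching seminorms: it observes that, up to the reindexing maps $v,w$ and the sign flip $i_2$, the analysis operator on $\cD'$ is the transpose of the synthesis operator on $\cD$ with conjugated window (and vice versa), so the whole of Proposition~\ref{frame-operators-distributions} falls out of Proposition~\ref{frame-operators-functions} together with the duality relations $s\widehat\otimes_\iota s'\cong(s'\widehat\otimes s)'$, $\ell^p\widehat\otimes s'\cong(\ell^q\widehat\otimes s)'$, and so on. This is precisely what makes $\cO_M'\cong s\widehat\otimes_\iota s'$ and $\cO_C'\cong s\widehat\otimes s'$ tractable; the direct control of the $\iota$-topology via equicontinuous families that you propose as a fallback would in effect have to reprove these duality statements by hand. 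Second, seminorm estimates alone cannot deliver $\dot\cB$, $\dot\cB'$ and the other $c_0$-type targets, since landing in $c_0\widehat\otimes s$ or $c_0\widehat\otimes s'$ is a vanishing condition rather than a bound; the paper handles this by a separate density argument ($\dot\cB'$ is the closure of $\cE'$ in $\cD'_{L^\infty}$, matched with the closure of the finite sequences in $\ell^\infty\widehat\otimes s'$), and your plan should do the same. A final caveat: invoking the Wilson-basis characterization of the modulation spaces $M^{p,q}_m$ and specializing covers only the translation-invariant global spaces (the paper itself uses it only for $\cD_{L^p}$); for $\cD$, $\cD^F$, $\cE$ and their duals one must argue directly from support localization, as you in fact sketch elsewhere in the proposal. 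With the transposition device and the density arguments added, your outline reproduces the proof.
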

Theorem \ref{Wilson-intro} and a simple reenumeration procedure show the isomorphisms in~\eqref{table_functions} and~\eqref{table_distributions} in an explicit way and prove Theorem \ref{thm-intro}.

An explicit but somewhat complicated common unconditional basis for the separable spaces in~\eqref{table_functions} has been obtained in~\cite{BExplicit}. Its coefficient functionals form a common unconditional basis for the separable spaces in~\eqref{table_distributions}. We shall improve these results by showing that, if  $\psi \in \cD(\R)$ is such that $\mathcal{W}(\psi)$ is a Wilson basis for $L^2(\R)$,  $\mathcal{W}(\psi)$ is a common unconditional basis for all separable spaces in~\eqref{table_functions} and~\eqref{table_distributions} (see Corollary \ref{prop:CommonBasis} below).

 Finally, we would like to point out that we only consider  the one-dimensional case to avoid some technicalities. 
 By using the tensor product  approach explained in~\cite[Section~12.3]{FTFA}, it is possible to extend the results from this paper  to higher dimensions.

\section{Sequence spaces}\label{sect-sequence spaces}
Let $I = \N$ or $\Z$.  As customary, we define the Banach spaces
\[ \ell^p(I) = \Big\{ c= (c_i)_{i} \in \C^{I} \ |\  \| c \| = \Big ( \sum_{i \in I} |c_i|^p \Big)^{1/p} < \infty \Big\}, \qquad 1 \leq p < \infty, \]
\[ \ell^\infty(I) = \{ c= (c_i)_{i} \in \C^{I} \ |\  \| c \| = \sup_{i \in I} |c_i|< \infty \}. \]
 For  $l \in \Z$ we define 
\[ (c_0(I))_l = \{ c= (c_i)_{i} \in \C^{I}\ |\ \lim_{|i| \to \infty} (1+\abso{i}^2)^{l/2} c_i = 0\}; \]
endowed with the norm
\[
\| c \| = \sup_{i \in I} (1+\abso{i}^2)^{l/2} |c_i|
\]
it becomes a Banach space. We simply write $c_0(I) = (c_0(I))_0$.  We set
$$
s(I) = \varprojlim_{l \in \N} (c_0(I))_l, \qquad s'(I) = \varinjlim_{l \in \N} (c_0(I))_{-l}.
$$
We will drop the index set $I$ from the notation if it is clear from the context. 

Given two Hausdorff locally convex spaces $X$ and $Y$, we write $X \widehat{\otimes}_{\pi} Y$, $X \widehat{\otimes}_{\varepsilon} Y$, and $X \widehat{\otimes}_{\iota} Y$ for the completion of the tensor product $X \otimes Y$ with respect to the projective, injective, and inductive topology, respectively. We simply write   $X \widehat{\otimes} Y  = X \widehat{\otimes}_{\pi} Y = X \widehat{\otimes}_{\varepsilon} Y$ if either $X$ or $Y$ is nuclear; this convention was already  tacitly used in  \eqref{table_functions} and \eqref{table_distributions}. We refer to \cite{Grothendieck,Jarchow,Komatsu3}  for more information on completed tensor products.

Although the use of completed tensor products gives a succinct and elegant way of representing the sequence spaces occurring in \eqref{table_functions} and \eqref{table_distributions},  it is sometimes more convenient to view them as concrete double sequence spaces. We now give such representations. Let $I = \N$ or $\Z$ and let $X$ be a Banach space. For $1 \leq  p \leq \infty$ we define
\[ \ell^p\{I,X\} = \{ (x_i)_{i} \in X^{I} \ |\ (\|x_i\|_X)_{i} \in \ell^p(I) \}. \]
Similarly, for $l \in \Z$ we define 
\[ (c_0\{I;X\})_l = \{ (x_i)_{i} \in X^{I}\ |\ \lim_{|i| \to \infty} (1+\abso{i}^2)^{l/2} x_i = 0 \textrm{ in } X\}. \]
We endow these spaces  with their natural Banach space structure. As before, we simply write $c_0\{I;X\} = (c_0\{I;X\})_0$. Let $J= \N$ or $\Z$. We have the following canonical isomorphisms of locally convex spaces ($1 \leq p \leq \infty$):
\begin{center}
\begin{tabular}{ l|l} 
$\C^{(I)} \widehat\otimes_\iota s(J) \cong s(J)^{(I)}$ &     $\C^{(I)} \widehat\otimes s'(J) \cong s'(J)^{(I)}$ \\
$\C^{(I)} \widehat\otimes s(J) \cong \varprojlim_l  (c_0(J))_l^{(I)}$  &  $\ell^p(I) \widehat\otimes s'(J) \cong \varinjlim_l  (c_0\{J;\ell^p(I)\})_{-l}$  \\
$s(I) \widehat \otimes s(J) \cong \varprojlim_l\varprojlim_k (c_0\{I;(c_0(J))_{k}\})_l$ &  $c_0(I)\widehat\otimes s'(J) \cong \varinjlim_l  c_0\{I;(c_0(J))_{-l}\}$  \\ 
$\ell^p(I) \widehat\otimes s(J) \cong \varprojlim_l (c_0\{J;\ell^p(I)\})_l$ & $s'(I) \widehat \otimes s'(J) \cong \varinjlim_l \varinjlim_k  (c_0\{I;(c_0(J))_{-k}\})_{-l}$ \\ 
$c_0(I) \widehat \otimes s(J) \cong \varprojlim_l  c_0\{I;(c_0(J))_{l}\}$ & $\C^I \widehat\otimes s'(J) \cong s'(J)^I$  \\
$s'(I) \widehat\otimes_\iota s(J) \cong \varinjlim_l \varprojlim_k (c_0\{I;(c_0(J))_{k}\})_{-l}$ &  \\
 $s'(I) \widehat\otimes s(J) \cong  \varprojlim_k\varinjlim_l (c_0\{I;(c_0(J))_{k} \})_{-l}$ & \\
 $\C^I \widehat\otimes s(J) \cong s(J)^I$ & 
\end{tabular}
\end{center}
The isomorphism 
$$
s'(I) \widehat\otimes_\iota s(J) \cong \varinjlim_l \varprojlim_k (c_0\{I;(c_0(J))_{k}\})_{-l}
$$ 
is shown in \cite[p.~323]{BargetzOC}. All other isomorphisms follow directly from general results about completed tensor products and completed tensor product representations of vector-valued sequence spaces \cite{Grothendieck,Jarchow,Komatsu3}. 

We have the following duality relations, where $1 < p \leq \infty$ and $q$ denotes the conjugate index of $p$:
\begin{center}
\begin{tabular}{ l|l} 
$ \C^{(I)} \widehat\otimes s'(J) \cong (\C^{I} \widehat\otimes s(J))' $ & $\ell^p(I) \widehat\otimes s'(J) \cong  (\ell^q(I) \widehat{\otimes} s(J))'$  \\
$ s(I) \widehat\otimes_\iota s'(J)  \cong (s'(I) \widehat\otimes s(J))' $ &   $s'(I) \widehat\otimes s'(J)  \cong  (s(I) \widehat{\otimes} s(J))'$ \\
     $ s(I) \widehat\otimes s'(J) \cong (s'(I) \widehat\otimes_\iota s(J))'   $&  $\C^{I} \widehat\otimes s'(J) \cong (\C^{(I)} \widehat\otimes_\iota s(J))'$ \\
     $ \ell^1(I) \widehat\otimes s'(J) \cong (c_0(I) \widehat{\otimes} s(J))' $&
 \end{tabular}   
\end{center}
 These identifications hold in the topological sense if we endow the dual spaces with their strong topology. We recall that the space $s'(I) \widehat\otimes s(J)$ is bornological \cite[Chap.\ II, p.\ 128]{Grothendieck}. Hence, the isomorphism  $s(I) \widehat\otimes_\iota s'(J)  \cong (s'(I) \widehat\otimes s(J))'$ follows from  \cite[Chap.\ II, p.\ 90]{Grothendieck} (cf.\ the proof of \cite[Proposition 1]{BargetzOC}). Since  $s'(I) \widehat\otimes s(J)$ is reflexive (as it is bornological), we also obtain the isomorphism  $s(I) \widehat\otimes s'(J)  \cong (s'(I) \widehat\otimes_\iota s(J))'$. All other isomorphisms follow from general duality results about completed tensor products \cite{Grothendieck,Komatsu3}.
 
\section{Wilson bases of $\mathscr{D}'$}

The goal of this section is to  extend the notion of a Wilson basis to $\mathscr{D}'$. We start by introducing two auxiliary operators that will play an important role throughout this article (cf.\ \cite[Chapter 6]{FTFA}).  Fix lattice parameters $a,b > 0$. For $\psi \in \cD$ we define the \emph{analysis  operator} as
\begin{equation}
 C_{\psi,a,b}(f) = C_\psi (f) = (\langle f, \overline{T_{ak}M_{bn}\psi} \rangle )_{(k,n) \in \Z^{2}},  \qquad f \in \cD',
\label{analysis-def}
\end{equation}
and the  \emph{synthesis  operator} as
\begin{equation}
 D_{\psi,a,b}(c) =  D_\psi(c) = \sum_{(k,n) \in \Z^{2}} c_{k,n} T_{ak}M_{bn}\psi, \qquad c = (c_{k,n})_{k,n} \in \C^{\Z} \widehat\otimes s'(\Z).
\label{synthesis-def}
\end{equation}
We need the following lemma.
\begin{lemma}\label{frame-operators-D} Let $a,b > 0$ and let $\psi \in \cD$.
  \begin{itemize}
  \item[$(i)$] The mapping
    \[
      C_\psi = C_{\psi, \cD} : \cD \rightarrow  \C^{(\Z)} \widehat\otimes_\iota s(\Z)
    \]
    is well-defined and continuous. 
  \item[$(ii)$] For each $c \in \C^{(\Z)} \widehat\otimes_\iota s(\Z)$  the series 
  $$
  D_\psi(c) = \sum_{(k,n) \in \Z^{2}} c_{k,n} T_{ak}M_{bn}\psi
  $$
   is unconditionally convergent in  $\cD$. The mapping 
    \[
      D_\psi = D_{\psi, \cD} :   \C^{(\Z)} \widehat\otimes_\iota  s(\Z) \rightarrow \cD 
    \] 
    is well-defined and continuous. 
      \end{itemize}
\end{lemma}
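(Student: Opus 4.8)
The two assertions concern the analysis and synthesis operators associated to a fixed window $\psi \in \cD$ and lattice parameters $a,b > 0$, acting on the smallest spaces in the tables, namely $\cD$ and its sequence space representation $\C^{(\Z)} \widehat\otimes_\iota s(\Z) \cong s(\Z)^{(\Z)}$. The plan is to reduce both statements to seminorm estimates. Recall that a typical seminorm on $\cD = \varinjlim_{K} \cD_K$ is $\sup_{|t| \le R} |f^{(j)}(t)|$ for a fixed compact interval $[-R,R]$ and $j \in \N$, while a typical seminorm on $s(\Z)^{(\Z)}$ measures, on each finite block of the outer $\Z$-index $k$, the rapid decay in the inner $\Z$-index $n$, i.e. quantities of the form $\sup_n (1+n^2)^{l/2}|c_{k,n}|$. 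The key analytic input is that for $\psi \in \cD$ and any $f$ that is smooth, the matrix coefficients $\langle f, \overline{T_{ak} M_{bn} \psi}\rangle$ are, up to normalization, short-time Fourier transform values $V_\psi f(ak, bn)$, and integration by parts in the frequency variable gives rapid decay in $n$.

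\emph{Step 1 (part $(i)$).} First I would observe that for $f \in \cD_K$ with $K \subseteq [-R,R]$, the coefficient $\langle f, \overline{T_{ak} M_{bn}\psi}\rangle = \int f(t)\, \overline{\psi(t-ak)}\, e^{-2\pi i b n t}\, \dt$ vanishes unless the support of $t \mapsto \psi(t-ak)$ meets $K$; since $\psi$ is compactly supported, this forces $k$ to lie in a finite set depending only on $R$ and $\operatorname{supp}\psi$. Hence the image sequence has finite support in $k$, landing in $\C^{(\Z)}$ in the outer index, which is exactly what the $(I)$-superscript in $s(\Z)^{(\Z)}$ demands. Next, for the nonvanishing values of $k$, I would integrate by parts $l$ times in $t$, moving derivatives onto $f(t)\overline{\psi(t-ak)}$ to produce the factor $(2\pi i b n)^{-l}$; the Leibniz rule and the compact support of the integrand then bound $(1+n^2)^{l/2}|\langle f, \overline{T_{ak}M_{bn}\psi}\rangle|$ by a finite sum of sup-seminorms of $f$ over $[-R,R]$, uniformly in $k$ and $n$. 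This yields continuity of $C_\psi$ into each step $(c_0(\Z))_l^{(\Z)}$ of the projective limit in the inner variable, hence into $\C^{(\Z)} \widehat\otimes_\iota s(\Z)$.

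\emph{Step 2 (part $(ii)$).} For the synthesis operator, given $c \in \C^{(\Z)} \widehat\otimes_\iota s(\Z)$, the outer support in $k$ is finite, so $D_\psi(c) = \sum_k \sum_n c_{k,n} T_{ak} M_{bn}\psi$ is a finite sum over $k$ of series in $n$. For fixed $k$, each $T_{ak}M_{bn}\psi$ is supported in the fixed translate $ak + \operatorname{supp}\psi$, so all terms live in a single compact set and it suffices to prove unconditional convergence in the Fréchet space $\cD_{K'}$ for a suitable $K'$. There I would estimate a seminorm $\sup_t |(T_{ak}M_{bn}\psi)^{(j)}(t)|$: differentiating $\psi(t-ak)e^{2\pi i b n(t-ak)}$ produces at most a polynomial factor of degree $j$ in $n$, so $\|T_{ak}M_{bn}\psi\|_{C^j}$ grows like $(1+n^2)^{j/2}$ times a constant. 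Pairing this polynomial growth against the rapid decay of $(c_{k,n})_n \in s(\Z)$ makes $\sum_n |c_{k,n}|\, \|T_{ak}M_{bn}\psi\|_{C^j}$ absolutely summable, which gives both unconditional convergence in $\cD$ and the continuity estimate for $D_\psi$ by tracking the constants back to the defining seminorms of $s(\Z)$.

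\emph{Main obstacle.} The routine parts are the integration-by-parts decay estimate and the polynomial-growth bound; the step requiring the most care is matching these pointwise estimates to the \emph{correct} locally convex topology, in particular verifying that the finite-support-in-$k$ phenomenon produces convergence and continuity for the inductive tensor topology $\widehat\otimes_\iota$ (as opposed to the weaker projective or injective ones) and identifying $\C^{(\Z)}\widehat\otimes_\iota s(\Z)$ concretely with $s(\Z)^{(\Z)}$ via the table in Section~\ref{sect-sequence spaces}. Once that identification is in hand, both $(i)$ and $(ii)$ follow block-by-block from the estimates above, and unconditional convergence is automatic since the absolute summability bound is independent of any ordering of the index set $\Z \times \Z$.
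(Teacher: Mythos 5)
Your proposal is correct and follows essentially the same route as the paper: the paper's proof of this lemma simply defers to the computation carried out in Proposition \ref{frame-operators-functions}, where part $(i)$ rests on the integration-by-parts estimate \eqref{basic-ineq} (rapid decay in the modulation index, finite support in the translation index) and part $(ii)$ on the Leibniz-rule bound \eqref{basic-ineq-2} (polynomial growth of the $C^j$-norms of $T_{ak}M_{bn}\psi$ in $n$, paired against the rapid decay of $(c_{k,n})_n$). Your identification of $\C^{(\Z)}\widehat\otimes_\iota s(\Z)$ with $s(\Z)^{(\Z)}$ and the block-by-block reduction to the steps $\cD_K$ and $s(\Z)^F$ is exactly the intended argument.
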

\begin{proof}
  This follows from a straightforward computation (cf.\ Proposition \ref{frame-operators-functions} below). 
\end{proof}
Set
\[
  i_2(c) = (c_{k,-n})_{k,n}, \qquad c \in \C^{\Z} \widehat\otimes s'(\Z).
\]
Then, the mappings $i_2: \C^{\Z} \widehat\otimes s'(\Z) \rightarrow \C^{\Z} \widehat\otimes s'(\Z)$ and $i_2: \C^{(\Z)} \widehat\otimes_\iota s(\Z) \rightarrow \C^{(\Z)} \widehat\otimes_\iota s(\Z)$ are both isomorphisms and each other's transposes. 
\begin{lemma}\label{auxiliary-operators-dual} Let $a,b > 0$ and let $\psi \in \cD$.
  \begin{itemize}
  \item[$(i)$] The mapping
    \[
      C_\psi = C_{\psi, \cD'} : \cD' \rightarrow  \C^{\Z} \widehat\otimes s'(\Z)
    \]
    is well-defined and continuous. Moreover,
    \begin{equation}
      \label{transpose-1}
      C_{\psi, \cD'} = i_2 \circ {}^tD_{\overline{\psi}, \cD}.
    \end{equation}
  \item[$(ii)$] For each $c \in \C^{\Z} \widehat\otimes s'(\Z)$  the series 
  \begin{equation}
   D_\psi(c) = \sum_{(k,n) \in \Z^{2}} c_{k,n} T_{ak}M_{bn}\psi
  \label{series-D}
  \end{equation}
 is unconditionally convergent  in $\cD'$. The mapping
    \[
      D_\psi = D_{\psi, \cD'} :   \C^{\Z} \widehat\otimes s'(\Z) \rightarrow \cD'
    \] 
    is well-defined and continuous.  Moreover, 
    \begin{equation}
      \label{transpose-2}
      D_{\psi, \cD'} = {}^tC_{\overline{\psi}, \cD} \circ i_2.
    \end{equation}
    
  \end{itemize}
\end{lemma}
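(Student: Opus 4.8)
The plan is to deduce both parts from Lemma~\ref{frame-operators-D} by transposition, exploiting the duality $\C^{\Z}\widehat\otimes s'(\Z)\cong(\C^{(\Z)}\widehat\otimes_\iota s(\Z))'$ recorded in Section~\ref{sect-sequence spaces} together with $\cD'=(\cD)'_\beta$. The one computational ingredient is the elementary conjugation identity
\[
\overline{T_{ak}M_{bn}\psi}=T_{ak}M_{-bn}\overline{\psi},\qquad (k,n)\in\Z^2,
\]
which follows directly from the definitions of $T_x$ and $M_\xi$; it is precisely this sign flip in the frequency index that is absorbed by the operator $i_2$.

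For part $(i)$ I start from the continuous map $D_{\overline{\psi},\cD}:\C^{(\Z)}\widehat\otimes_\iota s(\Z)\to\cD$ of Lemma~\ref{frame-operators-D}$(ii)$. Its transpose is continuous for the strong topologies (a continuous linear map $T$ sends bounded sets to bounded sets, so the strong seminorm estimate $p_B({}^tTg)=p_{T(B)}(g)$ exhibits a continuous seminorm), and under the duality identification it is a map ${}^tD_{\overline{\psi},\cD}:\cD'\to\C^{\Z}\widehat\otimes s'(\Z)$. Unwinding the transpose against finitely supported $c$ gives $({}^tD_{\overline{\psi},\cD}f)_{k,n}=\langle f,T_{ak}M_{bn}\overline{\psi}\rangle$. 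Applying $i_2$ and the conjugation identity then yields $(i_2\,{}^tD_{\overline{\psi},\cD}f)_{k,n}=\langle f,T_{ak}M_{-bn}\overline{\psi}\rangle=\langle f,\overline{T_{ak}M_{bn}\psi}\rangle=C_{\psi,\cD'}(f)_{k,n}$. Since $i_2$ is an isomorphism of $\C^{\Z}\widehat\otimes s'(\Z)$, this simultaneously shows that $C_{\psi,\cD'}$ is well defined, continuous, and satisfies \eqref{transpose-1}.

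For part $(ii)$ I symmetrically transpose the continuous map $C_{\overline{\psi},\cD}:\cD\to\C^{(\Z)}\widehat\otimes_\iota s(\Z)$ of Lemma~\ref{frame-operators-D}$(i)$, obtaining a continuous map ${}^tC_{\overline{\psi},\cD}:\C^{\Z}\widehat\otimes s'(\Z)\to\cD'$, and set $D_{\psi,\cD'}:={}^tC_{\overline{\psi},\cD}\circ i_2$, which is continuous and gives \eqref{transpose-2} by construction. To identify this operator with the series \eqref{series-D}, I test against $\varphi\in\cD$: expanding $\langle D_{\psi,\cD'}(c),\varphi\rangle=\langle i_2 c,C_{\overline{\psi},\cD}\varphi\rangle$, using $(C_{\overline{\psi},\cD}\varphi)_{k,n}=\langle\varphi,T_{ak}M_{-bn}\psi\rangle$ together with the conjugation identity, and reindexing $n\mapsto-n$, yields $\langle D_{\psi,\cD'}(c),\varphi\rangle=\sum_{k,n}c_{k,n}\langle T_{ak}M_{bn}\psi,\varphi\rangle$, so $D_{\psi,\cD'}(c)$ is exactly the distribution associated with \eqref{series-D}.

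Finally, the unconditional convergence of \eqref{series-D} in $\cD'$ I obtain from the continuity of $D_{\psi,\cD'}$ together with the fact that the canonical unit family $(c_{k,n}e_{k,n})_{k,n}$ is unconditionally summable to $c$ in $\C^{\Z}\widehat\otimes s'(\Z)$. For the latter I use the concrete description $\C^{\Z}\widehat\otimes s'(\Z)\cong s'(\Z)^{\Z}$ from Section~\ref{sect-sequence spaces}: convergence in the product is coordinatewise in $k$, and in each factor $s'(\Z)=\varinjlim_l(c_0(\Z))_{-l}$ an element lies in some step $(c_0(\Z))_{-l}$, where the weighted tails $\sup_{|n|>N}(1+|n|^2)^{-l/2}|c_{k,n}|$ tend to $0$, giving the Cauchy-net (hence unconditional) summability of the unit vectors. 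Since continuous linear maps preserve unconditional summability and $D_{\psi,\cD'}$ sends $e_{k,n}$ to $T_{ak}M_{bn}\psi$, the series \eqref{series-D} converges unconditionally to $D_{\psi,\cD'}(c)$ in $\cD'$. I expect this last step to be the main point requiring care: one must argue unconditional summability in the strong topology through the concrete $(DF)$-structure of $s'(\Z)$, rather than settling for the merely weak convergence that the pairing computation alone provides.
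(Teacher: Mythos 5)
Your proof is correct and follows essentially the same route as the paper: both parts are obtained by transposing the two operators of Lemma~\ref{frame-operators-D}, composing with $i_2$, and identifying the result with $C_{\psi,\cD'}$ resp.\ $D_{\psi,\cD'}$ through the duality pairing. The only (minor) divergence is in the unconditional convergence of \eqref{series-D}: the paper deduces it in one line from Lemma~\ref{frame-operators-D}$(i)$ via the pairing, whereas you push forward the unconditional summability of the unit-vector expansion in $\C^{\Z}\widehat\otimes s'(\Z)\cong s'(\Z)^{\Z}$ through the continuous synthesis operator — a slightly more explicit argument that reaches the same conclusion.
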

\begin{proof}
  $(i)$ Let $f \in \cD'$ be arbitrary. For all $c \in \C^{(\Z)} \widehat\otimes_\iota s(\Z)$ we have, by  Lemma \ref{frame-operators-D}$(ii)$,
  \begin{align*}
    \langle  i_2 \circ {}^tD_{\overline{\psi}, \cD}(f), c \rangle
    &= \langle f, D_{\overline{\psi}, \cD}(i_2(c)) \rangle = \langle f, \sum_{(k,n)  \in \Z^{2}} c_{k,n}  T_{ak}M_{-bn} \overline{\psi} \rangle \\
    &=  \sum_{(k,n) \in \Z^{2}} c_{k,n}  \langle f, T_{ak}M_{-bn} \overline{\psi} \rangle  = \langle C_{\psi,\cD'}(f),c \rangle.
  \end{align*}
 This shows that $C_{\psi, \cD'} = i_2 \circ {}^tD_{\overline{\psi}, \cD}$ and thus also that $C_{\psi, \cD'}$ is well-defined and continuous.

  $(ii)$  Let $c \in    \C^{\Z} \widehat\otimes s'(\Z)$ be arbitrary. Lemma \ref{frame-operators-D}$(i)$ implies that  the series  \eqref{series-D} is unconditionally convergent in $\mathscr{D}'$. Hence, we have that for all $\varphi \in \mathscr{D}$ 
  \[
      \langle  D_{\psi, \cD'}(c), \varphi \rangle =  \sum_{(k,n) \in \Z^{2}} c_{k,n} \langle T_{ak}M_{bn} \psi, \varphi \rangle =  \langle c, i_2( C_{\overline{\psi}, \cD}(\varphi)) \rangle = \langle {}^tC_{\overline{\psi}, \cD}(i_2(c)), \varphi \rangle.
  \]
  This shows that $D_{\psi, \cD'} = {}^tC_{\overline{\psi}, \cD} \circ i_2$ and thus also that $D_{\psi, \cD'}$ is continuous.
\end{proof}
Next, we consider Wilson systems on $\cD'$. For $\psi \in \cD$ we define the \emph{Wilson analysis operator} as
\[
\widetilde{C}_\psi (f) = (\langle f, \overline{\psi_{k,n}} \rangle )_{(k,n) \in \Z \times \N},  \qquad f \in \cD',
\]
and the  \emph{Wilson synthesis  operator} as
\[
 \widetilde{D}_\psi(c) = \sum_{(k,n) \in \Z \times \N} c_{k,n} \psi_{k,n}, \qquad c \in \C^{\Z} \widehat\otimes s'(\N),
\]
where the functions $\psi_{k,n}$ are defined in \eqref{eq:Wilson1D}. We now relate the above mappings to the analysis and synthesis operators  introduced in \eqref{analysis-def} and \eqref{synthesis-def}.  For $c \in \C^{\Z} \widehat\otimes s'(\Z)$ we define the element $v(c) \in  \C^{\Z} \widehat\otimes s'(\N)$ via
\begin{equation}
\label{def-v}
    v(c)_{k,n} =
    \begin{cases}
      c_{2k,0}, & n=0,\\
      \frac{1}{\sqrt{2}} (c_{k,n} + (-1)^{n+k}c_{k,-n}), & n>0.
    \end{cases}
\end{equation}
For $c \in \C^{\Z} \widehat\otimes s'(\N)$ we define the element $w(c) \in  \C^{\Z} \widehat\otimes s'(\Z)$ via
\begin{equation}
\label{def-w}
    w(c)_{k,n} =
    \begin{cases}
      0, & n=0\; \text{and}\; k\;\text{odd}, \\
      c_{\frac{k}{2},0}, & n=0\; \text{and}\; k\;\text{even}, \\
      \frac{1}{\sqrt{2}} c_{k,n} & n>0, \\
      \frac{(-1)^{k+n}}{\sqrt{2}} c_{k,-n} & n<0 .
    \end{cases}
\end{equation}
Note that the mappings
\[
  v\colon \C^{\Z} \widehat\otimes s'(\Z) \rightarrow \C^{\Z} \widehat\otimes s'(\N) \quad \mbox{and} \quad
  w\colon  \C^{\Z} \widehat\otimes s'(\N) \rightarrow \C^{\Z} \widehat\otimes s'(\Z)
\]
are well-defined and continuous.
\begin{proposition}\label{Wilson-operators-D-dual} Let $\psi \in \cD$. Then,
  \begin{itemize}
  \item[$(i)$] The mapping
    \[
      \widetilde{C}_{\psi} : \cD' \rightarrow  \C^{\Z} \widehat\otimes s'(\N)
    \]
    is well-defined and continuous. Moreover, 
    \begin{equation}
      \label{representation-1}
       \widetilde{C}_{\psi}  = v \circ  C_{\psi, \frac{1}{2},1}.
    \end{equation}
  \item[$(ii)$]  For each $c \in \C^{\Z} \widehat\otimes s'(\N)$  the series 
  $$
  \widetilde{D}_\psi(c) = \sum_{(k,n) \in \Z \times \N} c_{k,n} \psi_{k,n}
  $$ 
  is unconditionally convergent in $\cD'$. The mapping
    \[
      \widetilde{D}_\psi :   \C^{\Z} \widehat\otimes s'(\N) \rightarrow \cD'
    \] 
    is well-defined and continuous. Moreover, 
    \begin{equation}
      \label{representation-2}
      \widetilde{D}_{\psi}  = D_{\psi, \frac{1}{2},1} \circ w.
    \end{equation}
   
  \end{itemize}
\end{proposition}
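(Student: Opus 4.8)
The plan is to deduce both parts directly from Lemma \ref{auxiliary-operators-dual} by establishing the factorizations \eqref{representation-1} and \eqref{representation-2}. Once these two identities are in hand, well-definedness, continuity, and (for the synthesis operator) unconditional convergence are inherited from the corresponding statements for $C_{\psi, \frac{1}{2}, 1} = C_{\psi, \cD'}$ and $D_{\psi, \frac{1}{2}, 1} = D_{\psi, \cD'}$, together with the continuity of $v$ and $w$ recorded just above the proposition. Thus the whole proof reduces to two algebraic identities, each verified by a case distinction over $n=0$ and $n \neq 0$ with careful bookkeeping of the complex conjugation and the alternating signs $(-1)^{k+n}$.

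For $(i)$ I would fix $f \in \cD'$ and compute $(v \circ C_{\psi, \frac{1}{2}, 1})(f)_{k,n}$ entrywise using \eqref{def-v}. For $n = 0$ this reads $C_{\psi, \frac{1}{2}, 1}(f)_{2k, 0} = \langle f, \overline{T_k \psi}\rangle = \langle f, \overline{\psi_{k,0}}\rangle$, which matches $\widetilde{C}_\psi(f)_{k,0}$. For $n > 0$ it equals $\frac{1}{\sqrt{2}}(\langle f, \overline{T_{k/2} M_n \psi}\rangle + (-1)^{n+k}\langle f, \overline{T_{k/2} M_{-n}\psi}\rangle)$. The crucial point is that conjugation commutes with translation and intertwines the modulations, $\overline{M_n \psi} = M_{-n}\overline{\psi}$, so that pulling the scalars into the bracket and using linearity turns this into $\langle f, \overline{\psi_{k,n}}\rangle$ with $\psi_{k,n}$ as in \eqref{eq:Wilson1D}. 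This yields \eqref{representation-1}, and hence $(i)$ follows from Lemma \ref{auxiliary-operators-dual}$(i)$ and the continuity of $v$.

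For $(ii)$ I would fix $c \in \C^\Z \widehat\otimes s'(\N)$ and expand $D_{\psi, \frac{1}{2}, 1}(w(c)) = \sum_{(k,n) \in \Z^2} w(c)_{k,n} T_{k/2} M_n \psi$ according to the four cases of \eqref{def-w}. The terms with $n = 0$ and $k$ odd vanish; reindexing $k = 2j$ in the terms with $n = 0$ and $k$ even produces $\sum_{j} c_{j,0}\psi_{j,0}$; and for each fixed $k$ and each $m > 0$ the terms with $n = m$ and $n = -m$ combine, using $(-1)^{k-m} = (-1)^{k+m}$, into $c_{k,m}\psi_{k,m}$. Summing over all blocks gives $\widetilde{D}_\psi(c)$, which is \eqref{representation-2}. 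This regrouping is precisely a partition of $\Z^2$ into the finite blocks indexed by $\Z \times \N$, so the unconditional convergence of the $\Z^2$-series in $\cD'$ supplied by Lemma \ref{auxiliary-operators-dual}$(ii)$ passes to unconditional convergence of the regrouped $\Z \times \N$-series; continuity of $\widetilde{D}_\psi$ then follows from \eqref{representation-2} and the continuity of $w$.

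I do not expect any serious obstacle: the argument runs completely parallel to the proof of Lemma \ref{auxiliary-operators-dual}, and the only place demanding genuine care is the consistent handling of the conjugation (which swaps $M_n$ and $M_{-n}$) together with the alternating signs in \eqref{def-v} and \eqref{def-w}, so that the linear combinations collapse exactly onto the Wilson functions \eqref{eq:Wilson1D}. The passage from unconditional convergence over $\Z^2$ to unconditional convergence over $\Z \times \N$ uses only the standard fact that grouping an unconditionally convergent series into finite blocks preserves unconditional convergence.
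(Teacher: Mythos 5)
Your proposal is correct and follows exactly the paper's route: the paper's proof simply states that \eqref{representation-1} and \eqref{representation-2} ``follow from an easy computation'' and then invokes Lemma \ref{auxiliary-operators-dual} together with the continuity of $v$ and $w$, which is precisely your reduction. Your entrywise verification of the two factorizations (including the sign bookkeeping $(-1)^{k-m}=(-1)^{k+m}$ and the block-regrouping argument for unconditional convergence) just fills in the computation the paper leaves to the reader.
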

\begin{proof}
The representations \eqref{representation-1} and \eqref{representation-2} follow from an easy computation. Hence, the result is a consequence of Lemma \ref{auxiliary-operators-dual} and the continuity of the mappings $v$ and $w$. 
\end{proof}
The following consequence of Proposition \ref{Wilson-operators-D-dual} is the foundation of our work.
\begin{theorem}\label{prop:WilsonBasisDprime}
 Let $\psi \in \cD$ be such that $\mathcal{W}(\psi)$ is a Wilson basis for $L^2$. Then,  $\mathcal{W}(\psi)$ is an unconditional basis of $\cD'$. The associated collection of coefficient functionals is given by the Wilson analysis operator  $\widetilde{C}_{\psi} : \cD' \rightarrow  \C^{\Z} \widehat\otimes s'(\N)$.
\end{theorem}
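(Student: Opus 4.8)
The plan is to show that the Wilson analysis and synthesis operators $\widetilde{C}_\psi$ and $\widetilde{D}_\psi$ from Proposition \ref{Wilson-operators-D-dual} are mutually inverse. Together with the unconditional convergence of the synthesis series already recorded in Proposition \ref{Wilson-operators-D-dual}$(ii)$, this is precisely the assertion that $\mathcal{W}(\psi)$ is an unconditional basis of $\cD'$ whose associated coefficient functionals are $\widetilde{C}_\psi$. Since both operators are continuous, it suffices to verify the two composition identities $\widetilde{D}_\psi \circ \widetilde{C}_\psi = \mathrm{id}_{\cD'}$ and $\widetilde{C}_\psi \circ \widetilde{D}_\psi = \mathrm{id}$ on $\C^\Z \widehat\otimes s'(\N)$, in each case first on a dense subspace and then, by continuity, everywhere.

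First I would prove $\widetilde{D}_\psi \circ \widetilde{C}_\psi = \mathrm{id}$ on the test functions. For $\varphi \in \cD \subset L^2$, the bilinear duality pairing gives $\widetilde{C}_\psi(\varphi)_{k,n} = \langle \varphi, \overline{\psi_{k,n}}\rangle = (\varphi, \psi_{k,n})_{L^2}$, so that $\widetilde{C}_\psi(\varphi)$ is exactly the sequence of Fourier coefficients of $\varphi$ with respect to the orthonormal basis $\mathcal{W}(\psi)$. The orthonormal expansion $\sum_{k,n}(\varphi,\psi_{k,n})_{L^2}\,\psi_{k,n}$ converges to $\varphi$ in $L^2$, hence in $\cD'$ via the continuous inclusion $L^2 \hookrightarrow \cD'$; on the other hand, by Proposition \ref{Wilson-operators-D-dual}$(ii)$ the same series converges unconditionally in $\cD'$ to $\widetilde{D}_\psi(\widetilde{C}_\psi(\varphi))$. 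As the two limits must coincide, $\widetilde{D}_\psi(\widetilde{C}_\psi(\varphi)) = \varphi$. Because $\cD$ is dense in $\cD'$ for the strong topology and $\widetilde{D}_\psi \circ \widetilde{C}_\psi$ agrees with the identity on this dense subspace, the two continuous maps coincide on all of $\cD'$.

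Next I would establish $\widetilde{C}_\psi \circ \widetilde{D}_\psi = \mathrm{id}$. For a finitely supported sequence $c$, the synthesis $\widetilde{D}_\psi(c) = \sum_{k,n} c_{k,n}\psi_{k,n}$ is a finite linear combination of elements of $\cD$, and a direct computation using the orthonormality $\langle \psi_{k,n}, \overline{\psi_{j,m}}\rangle = (\psi_{k,n},\psi_{j,m})_{L^2} = \delta_{(k,n),(j,m)}$ yields $\widetilde{C}_\psi(\widetilde{D}_\psi(c))_{j,m} = c_{j,m}$. Since the finitely supported sequences are dense in $\C^\Z \widehat\otimes s'(\N) \cong s'(\N)^\Z$ and both operators are continuous, this identity extends to the whole space. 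Combining the two steps, $\widetilde{C}_\psi$ and $\widetilde{D}_\psi$ are mutually inverse topological isomorphisms; in particular every $f \in \cD'$ admits the unconditionally convergent expansion $f = \sum_{k,n} \langle f, \overline{\psi_{k,n}}\rangle\,\psi_{k,n}$, and this expansion is unique because applying $\widetilde{C}_\psi$ to any representation $f = \widetilde{D}_\psi(c)$ forces $c = \widetilde{C}_\psi(f)$.

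The step I expect to be the main obstacle is the first one: identifying the $\cD'$-limit of the synthesis series with the $L^2$-orthonormal expansion. This demands care that the bilinear pairing against $\overline{\psi_{k,n}}$ really reproduces the sesquilinear $L^2$ inner product against $\psi_{k,n}$, that the unconditional $\cD'$-convergence furnished by Proposition \ref{Wilson-operators-D-dual}$(ii)$ is compatible with the $L^2$-convergence of the orthonormal expansion so that the two limits genuinely agree, and that $\cD$ is indeed dense in $\cD'$ for the strong topology, legitimizing the continuity-and-density extension. By comparison, the second identity is routine, resting only on the orthonormality of $\mathcal{W}(\psi)$ and the density of the finite sequences.
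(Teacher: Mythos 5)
Your argument is correct and is exactly the route the paper intends: the paper states this theorem without a written proof, presenting it as a direct consequence of Proposition \ref{Wilson-operators-D-dual}, and your proof supplies the standard missing details (continuity of $\widetilde{C}_\psi$ and $\widetilde{D}_\psi$, the identities $\widetilde{D}_\psi\circ\widetilde{C}_\psi=\mathrm{id}$ and $\widetilde{C}_\psi\circ\widetilde{D}_\psi=\mathrm{id}$ via the $L^2$-orthonormal basis property on the dense subspaces $\cD\subseteq\cD'$ and $\C^{(\Z\times\N)}\subseteq\C^{\Z}\widehat\otimes s'(\N)$). The only cosmetic remark is that for uniqueness of the expansion with \emph{arbitrary} scalar coefficients one can simply pair a convergent series $\sum c_{k,n}\psi_{k,n}$ termwise with $\overline{\psi_{j,m}}\in\cD\subseteq(\cD')'$, which avoids assuming a priori that $c\in\C^{\Z}\widehat\otimes s'(\N)$.
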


\section{Continuity of the analysis and synthesis operator}
Motivated by the representations \eqref{representation-1} and \eqref{representation-2}, we investigate in this section the mapping properties of  the analysis and synthesis operator on  the spaces occurring in \eqref{table_functions} and \eqref{table_distributions}. The \emph{short-time Fourier transform} (STFT) of $f \in \cD'$ with respect to a window  $\psi \in \cD$ is defined as
\[
  V_\psi f(x,\xi) :=  \langle f, \overline{M_\xi T_x\psi} \rangle, \qquad (x,\xi) \in \R^{2}.
\]
Obviously, $V_\psi f \in C(\R^{2})$. We refer to \cite[Section 2]{DV21} for a detailed study of the STFT on  $\cD'$. Note that, for $a,b > 0$ fixed,
\begin{equation}
|C_\psi(f)_{k,n}| = |V_\psi f(ak, bn)|, \qquad (k,n) \in \Z^2.
\label{linkSTFT}
\end{equation}
\begin{proposition} \label{frame-operators-functions} Let $a,b > 0$ and let $\psi \in \cD$. Consider the diagram
  \begin{equation}\label{table_functions-1}
    \begin{gathered}
      \xymatrix@R=1.5em@C=0.6em{
        \cD \ar@{}[r]|-*[@]\txt{$\subseteq$} \ar@{}[d]|-*\txt{\rotatebox[origin=c]{-90}{$\leftrightarrow$}} &
        \cD^F \ar@{}[r]|-*[@]\txt{$\subseteq$} \ar@{}[d]|-*\txt{\rotatebox[origin=c]{-90}{$\leftrightarrow$}} &
        \cS \ar@{}[r]|-*[@]\txt{$\subseteq$} \ar@{}[d]|-*\txt{\rotatebox[origin=c]{-90}{$\leftrightarrow$}} &
        \cD_{L^p} \ar@{}[r]|-*[@]\txt{$\subseteq$} \ar@{}[d]|-*\txt{\rotatebox[origin=c]{-90}{$\leftrightarrow$}} &
        \dot\cB \ar@{}[r]|-*[@]\txt{$\subseteq$} \ar@{}[d]|-*\txt{\rotatebox[origin=c]{-90}{$\leftrightarrow$}} &
        \cD_{L^\infty} \ar@{}[r]|-*[@]\txt{$\subseteq$} \ar@{}[d]|-*\txt{\rotatebox[origin=c]{-90}{$\leftrightarrow$}} &
        \cO_C \ar@{}[r]|-*[@]\txt{$\subseteq$} \ar@{}[d]|-*\txt{\rotatebox[origin=c]{-90}{$\leftrightarrow$}} &
        \cO_M \ar@{}[r]|-*[@]\txt{$\subseteq$} \ar@{}[d]|-*\txt{\rotatebox[origin=c]{-90}{$\leftrightarrow$}} &
        \cE \ar@{}[d]|-*\txt{\rotatebox[origin=c]{-90}{$\leftrightarrow$}}
        \\
        \C^{(\Z)} \widehat\otimes_\iota s \ar@{}[r]|-*[@]\txt{$\subseteq$} &
        \C^{(\Z)} \widehat\otimes s \ar@{}[r]|-*[@]\txt{$\subseteq$} &
        s \widehat\otimes s \ar@{}[r]|-*[@]\txt{$\subseteq$} &
        \ell^p \widehat\otimes s \ar@{}[r]|-*[@]\txt{$\subseteq$} &
        c_0 \widehat\otimes s \ar@{}[r]|-*[@]\txt{$\subseteq$} &
        \ell^\infty  \widehat\otimes s \ar@{}[r]|-*[@]\txt{$\subseteq$} &
        s' \widehat\otimes_\iota s \ar@{}[r]|-*[@]\txt{$\subseteq$} &
        s' \widehat\otimes s \ar@{}[r]|-*[@]\txt{$\subseteq$} &
        \C^{\Z} \widehat\otimes s
      }
    \end{gathered}
  \end{equation}
  where the sequence spaces are defined over the index set $\Z^2$. Then,
  \begin{enumerate}[label = (\roman*)]
  \item The analysis operator $C_\psi$ maps each space in the upper row of \eqref{table_functions-1} continuously into the corresponding space in the lower row.
  \item The synthesis operator $D_\psi$ maps each space in the lower row of  \eqref{table_functions-1} continuously into the corresponding space in the upper row.
  \end{enumerate}
\end{proposition}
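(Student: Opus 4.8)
The plan is to reduce the whole statement to pointwise estimates on the short-time Fourier transform, exploiting the identity \eqref{linkSTFT}, which says that the entries of $C_\psi(f)$ are exactly the samples of $V_\psi f$ on the lattice $a\Z \times b\Z$. In every tensor product in the lower row of \eqref{table_functions-1} the second factor is always $s$; this factor is indexed by $n$ and records the behaviour of $V_\psi f$ in the frequency variable $\xi = bn$, while the first factor is indexed by $k$ and records the behaviour in the time variable $x = ak$. These two directions decouple, and the frequency direction turns out to be uniform across all nine spaces.

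First I would establish a single master estimate for the analysis operator. Writing $\mathrm{supp}\,\psi \subseteq [-R,R]$ and integrating by parts $N$ times in the defining integral of $V_\psi f$ trades each power of $\xi$ for a derivative falling on $f(t)\overline{\psi(t-x)}$, which is supported in $t \in x + [-R,R]$. Since $\psi$ and all its derivatives are bounded, this yields, for every $N \in \N$,
\begin{equation*}
 |V_\psi f(x,\xi)| \leq C_N (1+|\xi|)^{-N} \max_{j \leq N} \ \sup_{|t-x| \leq R} |f^{(j)}(t)|.
\end{equation*}
Smoothness of the elements of every space in the upper row (they are all contained in $\cE$) then forces rapid decay in $\xi$, hence the $s$-factor in $n$ for all nine spaces at once. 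It remains to read off the time direction from $g_j(x) := \sup_{|t-x|\le R}|f^{(j)}(t)|$: a local Sobolev estimate bounds $g_j(x)$ by finitely many derivatives of $f$ on $x+[-R',R']$, and sampling at $x = ak$ (with bounded overlap of the windows) converts each defining seminorm of the space into the matching sequence seminorm. Concretely, compact support gives finitely many nonzero columns ($\C^{(\Z)}$), rapid decay gives $s$, the $L^p$-, $c_0$- and $L^\infty$-conditions give $\ell^p$, $c_0$ and $\ell^\infty$, no condition gives $\C^{\Z}$, and polynomial growth of the derivatives gives $s'$. This settles (i).

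For the synthesis operator (ii) I would argue directly. On any compact set $K$ only the finitely many $k$ with $(ak+[-R,R]) \cap K \neq \emptyset$ contribute to $D_\psi(c) = \sum_{k,n} c_{k,n} T_{ak}M_{bn}\psi$; for each such $k$ the inner series $\sum_n c_{k,n} M_{bn}\psi$ converges in $\cD$, since differentiating $M_{bn}\psi$ produces only polynomial factors $(bn)^j$, which are absorbed by the rapid decay of $(c_{k,n})_n \in s$. Estimating the derivatives of $D_\psi(c)$ in the relevant seminorm ($L^p$-norm, sup-norm, or weighted sup on compacta) by the sequence seminorm of $c$, again using the bounded overlap of the translated windows, gives continuity into each space of the upper row; unconditional convergence follows because all estimates are in terms of $|c_{k,n}|$ alone.

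The main obstacle I expect is not the ``pure'' spaces $\cS$, $\cD_{L^p}$, $\dot\cB$, $\cD_{L^\infty}$, $\cE$, but the inductive tensor products and the mixed-limit multiplier/convolutor rows $\cO_C \leftrightarrow s'\widehat\otimes_\iota s$ and $\cO_M \leftrightarrow s'\widehat\otimes s$. Here the target spaces are the iterated limits $\varinjlim_l\varprojlim_k$ and $\varprojlim_k\varinjlim_l$ from Section \ref{sect-sequence spaces}, and one must track the order of the limits: the master estimate has to be applied with seminorm constants uniform in exactly the right variable, so that the polynomial growth exponent (the $l$ of $s'$) and the frequency decay order (the $k$ of $s$) decouple in the manner dictated by $\widehat\otimes_\iota$ versus $\widehat\otimes$. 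Matching these topologies, together with the continuity of the inductive-limit map for $\cD \leftrightarrow \C^{(\Z)}\widehat\otimes_\iota s$, is where the bookkeeping is genuinely delicate; the STFT estimates themselves are routine.
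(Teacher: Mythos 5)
Your proposal is correct and, for seven of the nine columns, is essentially the paper's own argument: the same integration-by-parts estimate trading powers of $\xi$ for derivatives of $f$ over the window $x+\operatorname{supp}\psi$ (the paper's \eqref{basic-ineq}--\eqref{basic-ineq-1}), the same weight-splitting via $(1+|x|)^l\leq(1+|t|)^{|l|}(1+|x+t|)^l$ to decouple the $s'$-growth in $k$ from the $s$-decay in $n$ for $\cS$, $\cO_C$, $\cO_M$, and the same bounded-overlap-of-supports argument for the synthesis operator. Your worry about the mixed limits for $\cO_C$ versus $\cO_M$ is resolved exactly as you suspect: the single estimate with constant $C_{m,l}$ matches seminorm data pairwise in $(m,l)$, so it serves both orders of quantifiers at once, and the paper dispatches this in one line. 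The genuine divergence is the column $\cD_{L^p}\leftrightarrow\ell^p\widehat\otimes s$, $1\leq p<\infty$, which is the only place where sampling a continuous $L^p$ bound on $V_\psi f$ at the lattice points is not automatic. The paper handles the analysis direction by proving the continuous embedding $\cD_{L^p}\hookrightarrow M^{p,\infty}_{1\otimes(1+|\cdot|)^m}$ via H\"older and then citing the Gabor-frame theory of modulation spaces \cite[Theorem 12.2.3]{FTFA} to pass to the sequence space, and handles the synthesis direction by a convolution trick with $1_{[0,a)}$ and Young's inequality; you instead propose a direct local Sobolev estimate $\sup_{|t-ak|\leq R}|f^{(j)}(t)|\leq C(\|f^{(j)}\|_{L^p(ak+I')}+\|f^{(j+1)}\|_{L^p(ak+I')})$ followed by summation over $k$ with bounded overlap. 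Your route works (it costs one extra derivative, which is harmless in $\cD_{L^p}=\varprojlim_m$), and it buys a self-contained elementary proof that never mentions modulation spaces; the paper's route buys brevity and places the result in the standard time-frequency framework. Do make sure, when writing this up, that the local Sobolev constant is uniform in $k$ and that the bounded-overlap count enters the $\ell^p$ sum raised to the power $p-1$ via H\"older, since these are the two points where the sketch currently says ``routine'' rather than giving the inequality.
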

\begin{proof} Set $K = \operatorname{supp} \psi$ and choose $R > 0$ such that $K \subseteq [-R,R]$.
  \begin{enumerate}[label = (\roman*), wide=0pt]   
\item For every $j \in  \N$ we have
  \begin{equation}
    \label{basic-ineq}
    |\xi^j V_\psi \varphi(x,\xi)| \leq (2\pi)^{-j} \|\psi\|_{\infty, j} \sum_{l \leq j} \binom{j}{l} \int_K |\varphi^{(l)}(x+t)| \dt, \qquad \varphi \in \mathscr{E}^{j}.
  \end{equation}
  Consequently, we have that for all $m \in \N$
  \begin{equation}    \label{basic-ineq-1}
    \begin{aligned}
    |V_\psi \varphi(x,\xi)|(1+|\xi|)^m  
    &\leq 2^m \max_{j \leq m} |\xi^j V_\psi \varphi(x,\xi)| \\ 
    &\leq 2^m |K| \|\psi\|_{\infty,m} \max_{j \leq m} \max_{t \in K} |\varphi^{(j)}(x+t)|, \qquad \varphi \in \mathscr{E}^{m}.
  \end{aligned}
  \end{equation}
  The above inequality and \eqref{linkSTFT} imply the result for $\cD$, $\cD^F$, $\dot{\cB}$, $\cD_{L^\infty}$, and $\cE$. By using the inequality
  \begin{equation}
    \label{subadd}
    (1+|x|)^l \leq (1+|t|)^{|l|}(1+|x+t|)^l, \qquad l \in \Z,
  \end{equation}
  we obtain from \eqref{basic-ineq-1} that for all $m \in \N$, $l \in \Z$
  $$
  |V_\psi \varphi(x,\xi)|(1+|\xi|)^m(1+|x|)^l \leq C_{m,l}  \max_{j \leq m} \sup_{t \in \R^d} |\varphi^{(j)}(t)|(1+|t|)^l, \qquad \varphi \in \mathscr{E}^{m},
  $$
  where $C_{m,l} = 2^m(1+R)^{|l|} |K|\|\psi\|_{\infty,m}$. The above inequality and \eqref{linkSTFT}  yield the result for $\cS$, $\cO_C$, and $\cO_M$. Finally, we consider  $\cD_{L^p}$ $(1 \leq p < \infty)$. By \cite[Theorem 12.2.3]{FTFA}, it suffices to show that for all $m \in \N$ the space $\cD_{L^p}$ is continuously included in the modulation space $M^{p,\infty}_{1 \otimes (1+|\, \cdot \,|)^m}$, or, equivalently, that the mapping $V_\psi : \cD_{L^p} \rightarrow L^{p,\infty}_{1 \otimes (1+|\, \cdot \,|)^m}$  is well-defined and continuous  (see \cite[Definition 11.3.1]{FTFA} for the definition of $M^{p,\infty}_{1 \otimes (1+|\, \cdot \,|)^m}$). The inequality  \eqref{basic-ineq} and H\"older's inequality imply that for all $\varphi \in \cD_{L^p}$
  \begin{align*}
    \| V_\psi \varphi \|_{L^{p,\infty}_{1 \otimes (1+|\, \cdot \,|)^m}} &\leq \sup_{\xi \in \R} \| V_\psi \varphi (\, \cdot \, , \xi) \|_{L^p}(1+|\xi|)^m \\
                                                                        &\leq 2^m \max_{j \leq m} \sup_{\xi \in \R^d} \| \xi^j V_\psi \varphi (\, \cdot \, , \xi) \|_{L^p} \\
                                                                        &\leq 2^m  \|\psi\|_{\infty, m} \max_{j \leq m} (2\pi)^{-j} \sum_{l \leq j} \binom{j}{l} \left ( \int_{\R^d} \left(\int_K |\varphi^{(l)}(x+t)| \dt\right)^p \dx \right)^{1/p} \\
                                                                        &\leq 2^m |K|^{(p-1)/p}  \|\psi\|_{\infty, m} \max_{l \leq m}   \left ( \int_{\R^d} \int_K |\varphi^{(l)}(x+t)|^p \dt \dx \right)^{1/p} \\
                                                                        &= 2^m |K|  \|\psi\|_{\infty, m} \| \varphi\|_{p,m}.
  \end{align*}
  This completes the proof of part $(i)$.

  $(ii)$
  For every $j \in \N$ we have
  \begin{equation}
    \label{basic-ineq-2}
    |(T_x M_\xi \psi)^{(j)}(t)| \leq (2\pi)^{j} (1+ |\xi|)^{j} \sum_{l \leq j} \binom{j}{l} |\psi^{(l)}(t-x)|.
  \end{equation}
  Consequently, we have that for all $j \in \N$
  \begin{equation}
    \label{basic-ineq-3}
    |(T_x M_\xi  \psi)^{(j)}(t)| \leq (4\pi)^{j} (1+ |\xi|)^{j} \max_{l \leq j} |\psi^{(l)}(t-x)|.
  \end{equation}
  The above inequality  implies the result for $\cD$, $\cD^F$, and $\cE$.  By using \eqref{subadd}, we obtain from \eqref{basic-ineq-3} that for all $j \in \N$, $l \in \Z$
  \[
    |(T_xM_\xi \psi)^{(j)}(t)|(1+|t|)^l \leq (4\pi)^{j}(1+R)^{|l|}  \|\psi\|_{\infty, j} (1+ |\xi|)^{j} (1+|x|)^l.
  \]
  The above inequality  implies the result for $\cS$, $\cO_C$, and $\cO_M$.  Next, we consider the spaces $\dot{\cB}$ and $\cD_{L^\infty}$. Note that
  \[
    D_\psi(c) = \sum_{k \in \Z}  \sum_{n \in \Z}  c_{k,n} T_{ak} M_{bn} \psi \mbox{ in $\cD'$,} \qquad c \in \C^{\Z} \widehat{\otimes} s'.
  \]
  Let $c \in \ell^\infty \widehat\otimes s$ be arbitrary. Inequality \eqref{basic-ineq-3} implies that for all $k \in \Z$ the series 
  \[
    \psi_k(c) = \sum_{n \in \Z}  c_{k,n} T_{ak} M_{bn} \psi 
  \]
  is unconditionally convergent in $\cD_{L^\infty}$ with
  \begin{equation}
    \label{norm-parts}
    \| \psi_k(c) \|_{\infty, m} \leq C_m \sup_{n \in \Z} |c_{k,n}|(1+|n|)^{m+2}, \qquad m \in \N,
  \end{equation}
  where $C_m = (4\pi)^m \max\{1,b^m\} \|\psi\|_{\infty,m} \sum_{n \in \Z} (1+|n|)^{-2}$. Moreover, $\operatorname{supp} \psi_k(c) \subseteq ak + K$. For $t \in \R$ we set $I_t = \{ k \in \Z \, | \, t \in ak + K \}$. Note that there is $B > 0$ such that $|I_t| \leq B$ for all $t \in \R$. Hence, by \eqref{norm-parts}, we have that for all $j \in \N$
\[
    |D^{(j)}_\psi(c)(t)| = \sum_{k \in I_t} |\psi_k^{(j)}(c)(t)| \leq BC_{j} \sup_{k \in I_t} \sup_{n \in \Z} |c_{k,n}|(1+|n|)^{j+2}.
\]
  The above inequality implies the result for $\cD_{L^\infty}$. Furthermore, this inequality also shows that $D_\psi(c) \in \dot{\cB}$ for all $c \in c_0 \widehat\otimes s$, which yields the result for  $\dot{\cB}$. Finally, we consider $\cD_{L^p}$ $(1 \leq p < \infty)$. We use the same idea as in the proof of \cite[Theorem 12.2.4]{FTFA}. Given a set $A \subseteq \R$, we write $1_A$ for the characteristic function of $A$. Since $K$ is compact, there is $e = (e_k)_{k \in \Z} \in \C^{(\Z)}$ (with only $0$ and $1$ as entries) such that
  $$
  1_K(t) \leq \sum_{k \in \Z} e_k T_{ak}1_{[0,a)}(t).
  $$
  Let $c \in \ell^p \widehat\otimes s$ be arbitrary. For all $j \in \N$, using the same notation as above, we have
  \begin{align*}
    |D^{(j)}_\psi(c)(t)| &\leq \sum_{k \in \Z} |\psi_k^{(j)}(c)(t)| \\
                              &\leq  \sum_{k \in \Z} \|\psi_k^{(j)}(c)\|_\infty T_{ak} 1_K(t) \\
                              &\leq \sum_{k \in \Z} \sum_{l \in \Z} \|\psi_k^{(j)}(c)\|_\infty e_{l} T_{a(k+l)} 1_{[0,a)}(t)\\
                              &\leq \sum_{k \in \Z} \sum_{l \in \Z} \|\psi_k^{(j)}(c)\|_\infty e_{l-k} T_{al} 1_{[0,a)}(t)\\
                              &= \sum_{l \in \Z} ((\|\psi_k^{(j)}(c)\|_\infty)_{k \in \Z} \ast e) (l) T_{al} 1_{[0,a)}(t).
  \end{align*}
  Hence, by \eqref{norm-parts} and Young's convolution inequality for $\ell^p$, we obtain that for all $m \in \N$
  \begin{align*}
    \| D_\psi(c)\|_{p,m} &\leq \left ( \int_{\R} \left | \sum_{l \in \Z} ((\|\psi_k(c)\|_{m,\infty})_{k \in \Z} \ast e) (l) T_{al} 1_{[0,a)}(t) \right|^p \dt \right)^{1/p} \\
                         &= \left ( \int_{\R} \sum_{l \in \Z} ((\|\psi_k(c)\|_{m,\infty})_{k \in \Z} \ast e)^p (l) T_{al} 1_{[0,a)}(t) \dt \right)^{1/p} \\
                         &= a^{1/p} \| (\|\psi_k(c)\|_{m,\infty})_{k \in \Z} \ast e \|_{\ell^p} \\
                         &\leq a^{1/p} \| (\|\psi_k(c)\|_{m,\infty})_{k \in \Z} \|_{\ell^p} \| e \|_{\ell^1} \\
                         &\leq  C_m a^{1/p} \| e \|_{\ell^1}  \| (\sup_{n \in \Z} |c_{k,n}|(1+|n|)^{m+2})_{k \in \Z} \|_{\ell^p} \\
                         &\leq C'_m \sup_{n \in \Z}  \| (c_{k,n})_{k \in \Z} \|_{\ell^p} (1+|n|)^{m+2 + (2/p)},
  \end{align*}
where $C'_m = C_m a^{1/p} \| e \|_{\ell^1}\left( \sum_{n \in \Z} (1+|n|)^{-2}\right)^{1/p}$. This completes the proof of part $(ii)$.
\end{enumerate}
\end{proof}

\begin{proposition}\label{frame-operators-distributions} Let $a,b > 0$ and let $\psi \in \cD$. Consider the diagram
  \begin{equation}\label{table_distributions-1}
    \begin{gathered}
      \xymatrix@R=1.5em@C=1em{
        \cE' \ar@{}[r]|-*[@]\txt{$\subseteq$} \ar@{}[d]|-*\txt{\rotatebox[origin=c]{-90}{$\leftrightarrow$}} &
        \cO_M' \ar@{}[r]|-*[@]\txt{$\subseteq$} \ar@{}[d]|-*\txt{\rotatebox[origin=c]{-90}{$\leftrightarrow$}} &
        \cO_C' \ar@{}[r]|-*[@]\txt{$\subseteq$} \ar@{}[d]|-*\txt{\rotatebox[origin=c]{-90}{$\leftrightarrow$}} &
        \cD'_{L^p} \ar@{}[r]|-*[@]\txt{$\subseteq$} \ar@{}[d]|-*\txt{\rotatebox[origin=c]{-90}{$\leftrightarrow$}} &
        \dot\cB' \ar@{}[r]|-*[@]\txt{$\subseteq$} \ar@{}[d]|-*\txt{\rotatebox[origin=c]{-90}{$\leftrightarrow$}} &
        \cD'_{L^\infty} \ar@{}[r]|-*[@]\txt{$\subseteq$} \ar@{}[d]|-*\txt{\rotatebox[origin=c]{-90}{$\leftrightarrow$}} &
        \cS' \ar@{}[r]|-*[@]\txt{$\subseteq$} \ar@{}[d]|-*\txt{\rotatebox[origin=c]{-90}{$\leftrightarrow$}} &
        \cD' \ar@{}[d]|-*\txt{\rotatebox[origin=c]{-90}{$\leftrightarrow$}}
        \\
        \C^{(\Z)} \widehat\otimes s' \ar@{}[r]|-*[@]\txt{$\subseteq$} &
        s \widehat\otimes_\iota s' \ar@{}[r]|-*[@]\txt{$\subseteq$} &
        s \widehat\otimes s' \ar@{}[r]|-*[@]\txt{$\subseteq$} &
        \ell^p \widehat\otimes s'  \ar@{}[r]|-*[@]\txt{$\subseteq$} &
        c_0 \widehat\otimes s'  \ar@{}[r]|-*[@]\txt{$\subseteq$} &
        \ell^\infty \widehat\otimes s' \ar@{}[r]|-*[@]\txt{$\subseteq$} &
        s' \widehat\otimes s' \ar@{}[r]|-*[@]\txt{$\subseteq$} &
        \C^{\Z} \widehat\otimes s'
      }
    \end{gathered}
  \end{equation}
    where the sequence spaces are defined over the index set $\Z^2$. Then,
  \begin{itemize}
  \item[$(i)$] The analysis operator $C_\psi$ maps each space in the upper row of \eqref{table_distributions-1} continuously into the corresponding space in the lower row.
  \item[$(ii)$] The synthesis operator $D_\psi$ maps each space in the lower row of \eqref{table_distributions-1} continuously into the corresponding space in the upper row. \end{itemize}
\end{proposition}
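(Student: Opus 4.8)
The plan is to deduce Proposition \ref{frame-operators-distributions} from Proposition \ref{frame-operators-functions} by duality, using the transpose identities \eqref{transpose-1} and \eqref{transpose-2} of Lemma \ref{auxiliary-operators-dual}. The key structural observation is that, with the single exception of $\dot{\cB}'$, each space in the upper row of \eqref{table_distributions-1} is the strong dual of a test function space occurring in the upper row of \eqref{table_functions-1}, and the associated sequence space in the lower row is the strong dual of the corresponding sequence space in \eqref{table_functions-1}; all of these identifications are precisely the duality relations collected in Section \ref{sect-sequence spaces}. Explicitly, $\cE'$, $\cO_M'$, $\cO_C'$, $\cS'$, and $\cD'$ are the strong duals of $\cE$, $\cO_M$, $\cO_C$, $\cS$, and $\cD$, while $\cD'_{L^p}$ (for $1 < p \le \infty$) is the strong dual of $\cD_{L^q}$ with $q$ the conjugate exponent; the matching of the sequence spaces is read off from the tables of canonical isomorphisms in Section \ref{sect-sequence spaces}. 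The endpoint $p = \infty$ gives $\cD'_{L^\infty}$, which I record separately since it is needed for the remaining space.

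For such a pair, write $X$ for the test function space with sequence space $E$ from \eqref{table_functions-1}, so that $Y = X'$ and $F = E'$ as strong duals. I will use that the transpose of a continuous linear map is continuous for the strong dual topologies (continuous linear maps send bounded sets to bounded sets). By Proposition \ref{frame-operators-functions}$(ii)$ the synthesis operator $D_{\overline\psi} : E \to X$ is continuous, whence ${}^tD_{\overline\psi} : Y \to F$ is continuous; by Proposition \ref{frame-operators-functions}$(i)$ the analysis operator $C_{\overline\psi} : X \to E$ is continuous, whence ${}^tC_{\overline\psi} : F \to Y$ is continuous. In view of \eqref{transpose-1} and \eqref{transpose-2} it remains only to check that the restrictions of the global operators $C_{\psi, \cD'}$ and $D_{\psi, \cD'}$ to $Y$ and $F$ agree with $i_2 \circ {}^tD_{\overline\psi}$ and ${}^tC_{\overline\psi} \circ i_2$, respectively. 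This is a routine compatibility of transposes with the inclusions $\cD \hookrightarrow X$ and $\C^{(\Z)} \widehat\otimes_\iota s \hookrightarrow E$, using that $D_{\overline\psi}$ and $C_{\overline\psi}$ are given by the same formulas on the smaller and larger spaces; granting it, $C_\psi$ maps $Y$ continuously into $F$ and $D_\psi$ maps $F$ continuously into $Y$, which is exactly $(i)$ and $(ii)$ for these spaces.

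It remains to treat $\dot{\cB}'$, whose sequence space $c_0 \widehat\otimes s'$ is \emph{not} the strong dual of any space appearing in \eqref{table_functions-1}; this is the step I expect to be the main obstacle. Here I would exploit that $\dot{\cB}'$ is the closure of $\cD$ in $\cD'_{L^\infty}$ and, correspondingly, that $c_0 \widehat\otimes s'$ is the closure of the finitely supported sequences in $\ell^\infty \widehat\otimes s'$, reducing the claim to the case of $\cD'_{L^\infty}$ already established above. For $(i)$: restricting the continuous operator $C_\psi : \cD'_{L^\infty} \to \ell^\infty \widehat\otimes s'$ to $\dot{\cB}'$, it suffices to see that the image lies in $c_0 \widehat\otimes s'$; since $C_\psi$ maps $\cD$ into $\C^{(\Z)} \widehat\otimes_\iota s \subseteq c_0 \widehat\otimes s'$ and $\cD$ is dense in $\dot{\cB}'$, closedness of $c_0 \widehat\otimes s'$ in $\ell^\infty \widehat\otimes s'$ gives $C_\psi(\dot{\cB}') \subseteq c_0 \widehat\otimes s'$. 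For $(ii)$: restricting $D_\psi : \ell^\infty \widehat\otimes s' \to \cD'_{L^\infty}$ to $c_0 \widehat\otimes s'$, the finitely supported sequences are dense and are carried by $D_\psi$ into $\cD \subseteq \dot{\cB}'$, so closedness of $\dot{\cB}'$ in $\cD'_{L^\infty}$ yields $D_\psi(c_0 \widehat\otimes s') \subseteq \dot{\cB}'$. The crux is thus the verification that $c_0 \widehat\otimes s'$ embeds as a closed subspace of $\ell^\infty \widehat\otimes s'$ (equivalently, that $\dot{\cB}'$ is a closed subspace of $\cD'_{L^\infty}$ containing $\cD$ densely); once this topological embedding is in place, the continuity of $C_\psi$ and $D_\psi$ on $\dot{\cB}'$ is inherited by restriction from the $\cD'_{L^\infty}$ case.
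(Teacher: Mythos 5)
Your proposal is correct and follows essentially the same route as the paper: all spaces except $\dot{\cB}'$ are handled by transposing Proposition \ref{frame-operators-functions} through the identities \eqref{transpose-1} and \eqref{transpose-2} and the duality table of Section \ref{sect-sequence spaces}, and $\dot{\cB}'$ is handled by viewing it as a closed subspace of $\cD'_{L^\infty}$ obtained as a closure (the paper closes up $\cE'$ and $\C^{(\Z)}\widehat\otimes s'$ rather than $\cD$ and the finitely supported sequences, which is equivalent). The only point you leave implicit is the case $p=1$, which your duality framework still covers via the pair $(\dot{\cB},\, c_0\widehat\otimes s)$ and the listed isomorphism $\ell^1\widehat\otimes s' \cong (c_0\widehat\otimes s)'$.
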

\begin{proof}
  For each space $X$ in the lower row of diagram \eqref{table_distributions-1} the mapping $i_2$ is an isomorphism from $X$ onto itself. Hence, for all spaces in the upper row of diagram \eqref{table_distributions-1}, besides $\dot{\cB}'$, $(i)$ follows from  \eqref{transpose-1} and Proposition \ref{frame-operators-functions}$(ii)$, while $(ii)$ follows from \eqref{transpose-2} and Proposition \ref{frame-operators-functions}$(i)$. We now show $(i)$ and $(ii)$ for $\dot{\cB}'$. We have that
  $c_0 \widehat\otimes s' = \overline{\C^{(\Z)} \widehat\otimes s'}^{{\ell}^\infty \widehat\otimes s'}$ and, by definition,  $\dot{\cB}' = \overline{\cE'}^{\cD'_{L^\infty}}$. Hence, $(i)$ follows from the continuity of the mapping $C_\psi: \cD'_{L^\infty} \rightarrow {\ell}^\infty \widehat\otimes s'$ and the inclusion $C_\psi(\mathscr{E}') \subseteq \C^{(\Z)} \widehat\otimes s'$, while $(ii)$ follows from the continuity of the mapping $D_\psi:  {\ell}^\infty \widehat\otimes s' \rightarrow \cD'_{L^\infty}$ and the inclusion $D_\psi(\C^{(\Z)} \widehat\otimes s') \subseteq \mathscr{E}'$.
\end{proof}

\section{Sequence space representations and bases via Wilson bases}
We are ready to provide sequence space representations and common unconditional bases for the test function and distribution spaces occurring in \eqref{table_functions} and \eqref{table_distributions}  via Wilson bases on $\cD'$. 
\begin{theorem} \label{USSR} Let $\psi \in \cD$ be such that $\mathcal{W}(\psi)$ is a Wilson basis for $L^2$. Then, the restriction of $\widetilde{C}_\psi$  to  each of the spaces in the upper row of \eqref{table_functions-1-1} and \eqref{table_distributions-1-1}  is an isomorphism onto the corresponding space in the lower row and its inverse is given by the restriction of $\widetilde{D}_\psi$ to this space.  
\end{theorem}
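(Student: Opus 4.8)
The plan is to deduce all seventeen statements from a single isomorphism at the level of $\cD'$ and $\C^\Z \widehat\otimes s'(\N)$, combined with the continuity statements already established in Propositions \ref{frame-operators-functions} and \ref{frame-operators-distributions}. First I would note that $\widetilde C_\psi \colon \cD' \to \C^\Z \widehat\otimes s'(\N)$ and $\widetilde D_\psi \colon \C^\Z \widehat\otimes s'(\N) \to \cD'$ are mutually inverse isomorphisms. Indeed, by Theorem \ref{prop:WilsonBasisDprime} the system $\mathcal W(\psi)$ is an unconditional basis of $\cD'$ whose coefficient functionals are given by $\widetilde C_\psi$, which yields $\widetilde D_\psi \circ \widetilde C_\psi = \mathrm{id}_{\cD'}$; conversely, for $c \in \C^\Z \widehat\otimes s'(\N)$ the series $\widetilde D_\psi(c)$ converges in $\cD'$ by Proposition \ref{Wilson-operators-D-dual}$(ii)$, and pairing it term by term with the fixed window $\overline{\psi_{k,n}}$ and invoking the $L^2$-orthonormality of $\mathcal W(\psi)$ gives $\widetilde C_\psi \circ \widetilde D_\psi = \mathrm{id}$ on $\C^\Z \widehat\otimes s'(\N)$. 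Both maps are continuous by Proposition \ref{Wilson-operators-D-dual}, so they are topological isomorphisms inverse to one another.

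Next I would reduce the theorem to two continuity statements. Fix a space $X$ in an upper row of \eqref{table_functions-1-1} or \eqref{table_distributions-1-1} and write $\lambda_X$ for the corresponding sequence space in the lower row (over $\Z \times \N$). I claim it suffices to prove that $\widetilde C_\psi$ maps $X$ continuously into $\lambda_X$ and that $\widetilde D_\psi$ maps $\lambda_X$ continuously into $X$. Granting this, restricting the two identities from the previous step gives $\widetilde D_\psi|_{\lambda_X} \circ \widetilde C_\psi|_X = \mathrm{id}_X$ and $\widetilde C_\psi|_X \circ \widetilde D_\psi|_{\lambda_X} = \mathrm{id}_{\lambda_X}$, whence $\widetilde C_\psi|_X \colon X \to \lambda_X$ is an isomorphism with inverse $\widetilde D_\psi|_{\lambda_X}$ and, in particular, $\widetilde C_\psi(X) = \lambda_X$. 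This is exactly the assertion of the theorem, so the whole proof comes down to these two continuity statements.

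To establish the continuity I would exploit the factorizations $\widetilde C_\psi = v \circ C_{\psi,\frac12,1}$ and $\widetilde D_\psi = D_{\psi,\frac12,1} \circ w$ from \eqref{representation-1} and \eqref{representation-2}. Propositions \ref{frame-operators-functions} and \ref{frame-operators-distributions}, applied with $a = \frac12$ and $b = 1$, already show that $C_{\psi,\frac12,1}$ maps $X$ continuously into the corresponding lower-row sequence space $\mu_X$ over $\Z^2$ in \eqref{table_functions-1} or \eqref{table_distributions-1}, and that $D_{\psi,\frac12,1}$ maps $\mu_X$ continuously back into $X$. Hence it remains only to verify that, for every such pair, $v$ restricts to a continuous map $\mu_X \to \lambda_X$ and $w$ to a continuous map $\lambda_X \to \mu_X$. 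This is a purely sequence-theoretic question. Inspecting \eqref{def-v} and \eqref{def-w}, both maps act on the frequency factor (the second tensor factor, which is always $s$ in the function table and $s'$ in the distribution table) by folding the index $\Z$ onto $\N$, respectively by its unfolding, an operation that preserves the weights $(1+|n|^2)^{l/2}$ since these agree at $\pm n$; on the spatial factor they act as the identity except on the slice $n = 0$, where $v$ performs the even-index downsampling $c \mapsto (c_{2k,0})_k$ and $w$ the corresponding zero-padded upsampling. Each of these operations — the folding on $s$ and $s'$ and the re-sampling on the spatial building blocks $\C^{(\Z)}$, $\ell^p$, $c_0$, $\ell^\infty$, $s$, $s'$ and $\C^\Z$ — is elementary to estimate, so the required continuity follows from the concrete double-sequence descriptions collected in Section \ref{sect-sequence spaces}.

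The main obstacle is precisely this last verification. Since the behaviour of $v$ and $w$ in the spatial index on the slice $n = 0$ (down- or up-sampling) differs from their behaviour for $n \neq 0$ (the identity), neither map is a tensor product of a single spatial operator with a single frequency operator, so the estimates cannot be read off the tensor structure and must be carried out directly on the double sequences, treating the slice $n = 0$ and its complement separately before recombining. The genuinely delicate cases are the three spaces carrying the finer inductive tensor topology, namely $\C^{(\Z)} \widehat\otimes_\iota s$, $s' \widehat\otimes_\iota s$ and $s \widehat\otimes_\iota s'$: there one cannot argue with a single seminorm and must instead use the iterated $\varprojlim$/$\varinjlim$ representations of the $\iota$-topology recorded in Section \ref{sect-sequence spaces}, checking that $v$ and $w$ respect the corresponding Banach (or Fréchet) building blocks. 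Everywhere else the estimates are routine and uniform in the pair $(\mu_X,\lambda_X)$.
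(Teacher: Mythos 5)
Your proposal is correct and follows essentially the same route as the paper: the paper's proof likewise combines the factorizations $\widetilde C_\psi = v \circ C_{\psi,\frac12,1}$ and $\widetilde D_\psi = D_{\psi,\frac12,1} \circ w$ with Theorem \ref{prop:WilsonBasisDprime} (for the mutual-inverse identities) and Propositions \ref{frame-operators-functions} and \ref{frame-operators-distributions} (for continuity), after asserting that $v$ and $w$ map each lower-row space of \eqref{table_functions-1}, \eqref{table_distributions-1} continuously into the corresponding space of \eqref{table_functions-1-1}, \eqref{table_distributions-1-1} and back. The only difference is presentational: you spell out the folding/resampling verification for $v$ and $w$ that the paper leaves as a one-line claim.
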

\begin{proof} The mapping  $v: \C^{\Z} \widehat\otimes s'(\Z) \rightarrow \C^{\Z} \widehat\otimes s'(\N)$  defined in  \eqref{def-v} maps each space in the lower row of  \eqref{table_functions-1} and \eqref{table_distributions-1}  continuously into its corresponding space  in the lower row of \eqref{table_functions-1-1} and \eqref{table_distributions-1-1}. Likewise,  the mapping  $w: \C^{\Z} \widehat\otimes s'(\N) \rightarrow \C^{\Z} \widehat\otimes s'(\Z)$  defined in  \eqref{def-w} maps each space in the lower row of \eqref{table_functions-1-1} and \eqref{table_distributions-1-1}  continuously into its corresponding space  in the lower row of  \eqref{table_functions-1} and \eqref{table_distributions-1}.  Hence, in view of the representations \eqref{representation-1} and \eqref{representation-2}, the result follows from Theorem \ref{prop:WilsonBasisDprime}, Proposition \ref{frame-operators-functions}, and Proposition \ref{frame-operators-distributions}.
 \end{proof}

\begin{corollary}\label{prop:CommonBasis}
Let $\psi \in \cD$ be such that $\mathcal{W}(\psi)$ is a Wilson basis for $L^2$. For every space $X$ in the upper row of \eqref{table_functions-1-1} and \eqref{table_distributions-1-1},  besides $\cD_{L^\infty}$ and $\cD'_{L^\infty}$, the system $\mathcal{W}(\psi)$  is an unconditional basis for $X$.
\end{corollary}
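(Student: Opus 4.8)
The plan is to deduce Corollary \ref{prop:CommonBasis} from Theorem \ref{USSR} by transporting the abstract basis properties through the isomorphisms $\widetilde{C}_\psi$. Recall that a sequence $(e_i)_i$ in a locally convex space $X$ is an unconditional Schauder basis precisely when every $f \in X$ admits a unique unconditionally convergent expansion $f = \sum_i c_i e_i$ with coefficients depending continuously on $f$; equivalently, the coefficient map $f \mapsto (c_i)_i$ is a topological isomorphism of $X$ onto a sequence space in which the canonical unit vectors form an unconditional basis. By Theorem \ref{USSR}, for each admissible $X$ the Wilson analysis operator $\widetilde{C}_\psi$ is a topological isomorphism of $X$ onto the corresponding sequence space $\lambda_X$ in the lower row of \eqref{table_functions-1-1} or \eqref{table_distributions-1-1}, with inverse the synthesis operator $\widetilde{D}_\psi$, and the latter expands $f = \widetilde{D}_\psi(\widetilde{C}_\psi f) = \sum_{k,n} \langle f, \overline{\psi_{k,n}}\rangle \psi_{k,n}$ unconditionally. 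Thus $\mathcal{W}(\psi)$ is an unconditional basis of $X$ as soon as the canonical unit vectors $(\delta_{k,n})$ form an unconditional (Schauder) basis of $\lambda_X$.

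First I would reduce the statement to a purely sequence-space assertion: since $\widetilde{C}_\psi \colon X \to \lambda_X$ is an isomorphism carrying $\psi_{k,n}$ to the unit vector $\delta_{k,n}$ (because $\widetilde{C}_\psi \psi_{k',n'} = \delta_{k',n'}$ by the biorthogonality built into Theorem \ref{prop:WilsonBasisDprime}), an isomorphism maps an unconditional basis to an unconditional basis, so it suffices to verify that $(\delta_{k,n})$ is an unconditional basis of each sequence space $\lambda_X$ occurring in the lower rows, excluding $\ell^\infty \widehat\otimes s$ and $\ell^\infty \widehat\otimes s'$ (the representatives of $\cD_{L^\infty}$ and $\cD'_{L^\infty}$). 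Next I would observe that all the admissible sequence spaces are built as projective and/or inductive limits of the weighted Banach building blocks $(c_0\{I;(c_0(J))_k\})_l$ and the spaces $\ell^p$, $c_0$, $s$, $s'$ via completed tensor products, and that in each such Banach building block the unit vectors form an unconditional basis: this is the standard fact that the canonical basis of $\ell^p$ $(1 \le p < \infty)$ and of $c_0$ (and their weighted vector-valued analogues) is unconditional, whereas in $\ell^\infty$ it fails to even be a Schauder basis — precisely the reason $\cD_{L^\infty}$ and $\cD'_{L^\infty}$ must be excluded.

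The remaining work is to promote the basis property from the Banach building blocks to the limits. For the Fréchet (projective-limit) spaces such as $s \widehat\otimes s$, $\ell^p \widehat\otimes s$, $c_0 \widehat\otimes s$, this is immediate because a countable reduced projective limit of Banach spaces in which the unit vectors form an equicontinuous unconditional basis inherits an unconditional Schauder basis with the same coefficient functionals. For the $(LB)$- and $(LF)$-type spaces (the distribution side and $\cO_C$, $\cO_M$), I would use the corresponding statement for inductive limits, together with the regularity of these limits, to conclude that the partial-sum and coefficient operators remain equicontinuous and that convergence of the expansion holds in the limit topology. For the countable direct sums $\C^{(\Z)}\widehat\otimes_\iota s$ etc.\ representing $\cD$ and $\cD^F$, unconditionality of the unit-vector basis is again clear from the direct-sum structure. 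I expect the main obstacle to be bookkeeping rather than conceptual: one must check uniformity of the unconditionality constants across the defining spectrum of each limit so that the reordered partial sums remain uniformly bounded, and one must handle the doubly-indexed $\varprojlim\varprojlim$, $\varinjlim\varprojlim$, and $\varinjlim\varinjlim$ representations coherently. Since $\widetilde{C}_\psi$ has already been shown in Theorem \ref{USSR} to be an isomorphism onto each $\lambda_X$ with inverse $\widetilde{D}_\psi$ realizing the unconditional expansion, the cleanest route is in fact to bypass explicit verification in each $\lambda_X$ and instead argue directly in $X$: for each admissible $X$, Theorem \ref{USSR} gives $f = \sum_{k,n} \langle f, \overline{\psi_{k,n}}\rangle \psi_{k,n}$ with the series converging unconditionally in $X$ and the coefficient functionals $f \mapsto \langle f, \overline{\psi_{k,n}}\rangle$ continuous, which is exactly the definition of $\mathcal{W}(\psi)$ being an unconditional Schauder basis of $X$; the exclusion of $\cD_{L^\infty}$ and $\cD'_{L^\infty}$ then comes from the failure of unconditional (indeed, of any Schauder) expansion in the $\ell^\infty$-factor, whose sequence space representation does not contain $\C^{(\Z)}\widehat\otimes s$ densely.
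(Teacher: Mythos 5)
Your main argument is essentially the paper's proof: the paper likewise observes that for every sequence space $Y$ in the lower rows other than $\ell^\infty\widehat\otimes s$ and $\ell^\infty\widehat\otimes s'$ the tensor product of the standard unit vector bases is an unconditional basis of $Y$, so the net of truncations $c\cdot 1_I$ ($I\subset\Z\times\N$ finite) converges to $c$ in $Y$, and then transports this through $\widetilde{D}_\psi$ via the identity $\widetilde{D}_\psi(c)-\sum_{(k,n)\in I}c_{k,n}\psi_{k,n}=\widetilde{D}_\psi(c-c\cdot 1_I)$ together with Theorem~\ref{USSR}. The paper simply asserts the unit-vector basis property of the $Y$'s as known rather than re-deriving it from the projective/inductive limit structure, so your extra bookkeeping is not needed but does no harm. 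One caution: the ``cleanest route'' you propose at the end is circular. Theorem~\ref{USSR} does \emph{not} assert that the expansion $f=\sum_{k,n}\langle f,\overline{\psi_{k,n}}\rangle\psi_{k,n}$ converges unconditionally \emph{in the topology of $X$}; unconditional convergence is only known in $\cD'$ (Proposition~\ref{Wilson-operators-D-dual} / Theorem~\ref{prop:WilsonBasisDprime}), and upgrading it to convergence in $X$ is precisely the content of the corollary. That upgrade is obtained exactly by the route you describe first (unit vectors are an unconditional basis of $\lambda_X$, then push forward by the continuous map $\widetilde{D}_\psi\colon\lambda_X\to X$), so you should keep that route and drop the shortcut.
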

\begin{proof}
 Given a set $I \subseteq \Z \times \N$, we write $1_I$ for the characteristic sequence of $I$. For every space $Y$ in the lower row of  \eqref{table_functions-1-1} and \eqref{table_distributions-1-1}, besides ${\ell}^\infty \widehat\otimes {s}$ and ${\ell}^\infty \widehat\otimes {s}'$, the tensor product of the standard bases is an unconditional basis for $Y$. Hence, for all $c \in Y$ the net $\{c \cdot 1_I \, | \, I \subset \Z \times \N \mbox{ finite} \}$ converges to $c$ in $Y$.  Since  for all $I \subset \Z \times \N$ finite
  \[
    \widetilde{D}_\psi(c) - \sum_{(k,n) \in I} c_{k,n} \psi_{k,n} = \widetilde{D}_\psi(c -c \cdot 1_I)\mbox{ in $\cD'$,} \qquad c \in \C^{\Z} \widehat{\otimes} {s}'(\N),
  \]
the result follows from Theorem \ref{USSR}.
\end{proof}

\noindent {\bf Acknowledgements.} A.~Debrouwere was supported by FWO-Vlaanderen through the postdoctoral grant 12T0519N.

\printbibliography

\end{document}